\numberwithin{equation}{section}
\title[ ]{H\"older continuity  of the integrated density of states for  Extended Harper's Model with Liouville frequency }
\author[W. Jian]{Wenwen Jian}
\address{School of Mathematical Sciences, Fudan University, Shanghai 200433, P. R. China} \email{wwjian16@fudan.edu.cn}
\author[Y. Shi]{Yunfeng Shi}
\address{School of Mathematical Sciences, Fudan University, Shanghai 200433, P. R. China} \email{yunfengshi13@fudan.edu.cn}
\keywords{Extended Harper's model, Liouville frequency, $\frac{1}{2}$-H\"{o}lder continuity, Carleson Homogeneity.}
\theoremstyle{plain}
\newtheorem{theorem}{Theorem}[section]
\newtheorem{lemma}[theorem]{Lemma}
\theoremstyle{definition}
\newtheorem{definition}[theorem]{Definition}
\begin{document}


\begin{abstract}
In this paper, we study the non-self-dual extended Harper's model with a Liouville frequency. Based on the work of \cite{SY}, we show that the integrated density of states (IDS for short) of the model  is $\frac{1}{2}$-H$\ddot{\text{o}}$lder continuous. An an application, we also obtain the Carleson homogeneity of the spectrum.
\end{abstract}
\maketitle
\section{Introduction and main results}
Let us consider the  extended Harper's model (EHM for short), which is given  by
\begin{equation}\label{h1}
(H_{\lambda,\alpha,x}u)_n=c(x+n\alpha)u_{n+1}+\overline{c}(x+(n-1)\alpha)u_{n-1}+2\cos{2\pi(x+n\alpha)}u_n,
\end{equation}
 where $u=\{u_n\}_{n\in\mathbb{Z}}\in \ell^2(\mathbb{Z})$ and
 \begin{eqnarray*}
 c(x)=c_{\lambda}(x)=\lambda_1e^{-2\pi i(x+\frac{\alpha}{2})}+\lambda_2+\lambda_{3}e^{2\pi i(x+\frac{\alpha}{2})},\\ \overline{c}(x)=\overline{c}_{\lambda}(x)=\lambda_1e^{2\pi i(x+\frac{\alpha}{2})}+\lambda_2+\lambda_{3}e^{-2\pi i(x+\frac{\alpha}{2})}.
 \end{eqnarray*}
 We call $\lambda=(\lambda_1,\lambda_2,\lambda_3)\in\mathbb{R}^3_+$ the coupling, $\alpha\in\mathbb{R}\setminus\mathbb{Q}$ the frequency and $x\in\mathbb{R}$ the phase. The EHM was originally proposed by Thouless \cite{Thouless} and if $\lambda_1=\lambda_3=0$ it reduces to the famous almost Mathieu operator (AMO for short). Physically, the EHM describes the influence of a transversal magnetic field of flux $\alpha$ on a single tight-binding electron in a 2-dimensional crystal layer (see \cite{Thouless,AJM}).


\begin{figure}[H]
 \centering
 \includegraphics[width=8.0cm]{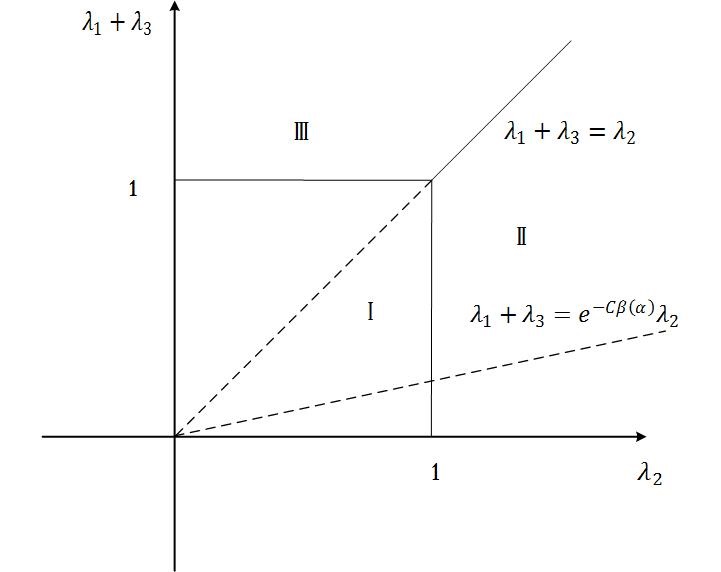}
\caption{}\label{Fig}
\end{figure}
For irrational frequency $\alpha$, the spectrum does not depend on $x$  and we denote it by $ \Sigma_{\lambda ,\alpha}$. Actually, the properties of $\Sigma_{\lambda,\alpha}$ rely heavily on $\lambda,\alpha$. In general, we split the coupling region into three parts (see \textsc{Figure} 1):
  \begin{eqnarray*}
  &&\mathrm{I}=\{(\lambda_1,\lambda_2,\lambda_3)\in\mathbb{R}^3_+:0<\max\{\lambda_1+\lambda_3,\lambda_2\}<1\},\\
 &&\mathrm{II}=\{(\lambda_1,\lambda_2,\lambda_3)\in\mathbb{R}^3_+:0<\max\{\lambda_1+\lambda_3,1\}<\lambda_2\},\\
  &&\mathrm{III}=\{(\lambda_1,\lambda_2,\lambda_3)\in\mathbb{R}^3_+:0<\max\{\lambda_2,1\}<\lambda_1+\lambda_3\}.
  \end{eqnarray*}
According to the duality map $\sigma:(\lambda_1,\lambda_2,\lambda_3)=\lambda\rightarrow\overline{\lambda}= (\frac{\lambda_3}{\lambda_2},\frac{1}{\lambda_2},\frac{\lambda_1}{\lambda_2})$, region $\mathrm{I}$ and region $\mathrm{II}$ are dual to each other and region $\mathrm{III}$ is the self-dual regime. Note that region $\mathrm{I}$ is the regime of positive Lyapunov exponent. When considering $\alpha\in\mathbb{R}\setminus\mathbb{Q}$, we call $\alpha$ a Liouville frequency if $\beta(\alpha)>0$, where
\begin{equation}\label{beta}
\beta(\alpha)=\limsup_{k\to \infty}\frac{-\ln\|k\alpha\|_{\mathbb{R}/\mathbb{Z}}}{|k|}
\end{equation}
and $\|x\|_{\mathbb{R}/\mathbb{Z}}=\min\limits_{k\in\mathbb{Z}}|x-k|$. On the contrary,  $\alpha$ is called a Diophantine frequency for $\beta(\alpha)=0$.

In the present paper, we focus on the regularity of the integrated density of states  $\mathcal{N}_{\lambda,\alpha}(\cdot)$ (see subsection \ref{ids} for details) and homogeneity of the spectrum  in the sense of Carleson for EHM.
The first main result of this paper is the following theorem.
\begin{theorem}\label{holder}
 Suppose $0<\beta(\alpha)<\infty$ and $\lambda\in\mathrm{II}$. Then there is an absolute constant $C>0$ such that if $\mathcal{L}_{\overline{\lambda}}> C\beta(\alpha)$, we have for $E_1,E_2\in \mathbb{R}$,
\begin{equation*}
 |\mathcal{N}_{\lambda,\alpha}(E_1)-\mathcal{N}_{\lambda,\alpha}(E_2)| \leq C_{\star}|E_1-E_2|^{\frac{1}{2}},
\end{equation*}
where $C_\star>0$ is a constant depending on $\alpha,\lambda$ and
 \begin{equation}\label{lya}
\mathcal{L}_{\overline{\lambda}}=\ln{\frac{\lambda_2+\sqrt{\lambda_2^2-4\lambda_1\lambda_3}}
{\max\{\lambda_1+\lambda_3,1\}+\sqrt{\max\{\lambda_1+\lambda_3,1\}^2-4\lambda_1\lambda_3}}}.
\end{equation}
\end{theorem}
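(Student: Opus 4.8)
\noindent\emph{Proof plan.} The plan is to reduce, via Aubry duality, to the positive Lyapunov exponent regime and there turn the statement into a regularity assertion about the Lyapunov exponent. The Fourier transform conjugates $H_{\lambda,\alpha,\cdot}$ to the direct integral over the phase of $\lambda_2H_{\overline{\lambda},\alpha,\cdot}$ with $\overline{\lambda}=\sigma(\lambda)\in\mathrm{I}$, whence $\mathcal N_{\lambda,\alpha}(E)=\mathcal N_{\overline{\lambda},\alpha}(E/\lambda_2)$; since an affine change of the energy variable preserves $\tfrac12$-Hölder continuity (only rescaling the constant by $\lambda_2^{1/2}$), it suffices to prove the bound for $\mathcal N_{\overline{\lambda},\alpha}$, i.e. in region $\mathrm{I}$ under $\mathcal L_{\overline{\lambda}}>C\beta(\alpha)$. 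In that regime, by \cite{AJM}, the Lyapunov exponent $L(\cdot)$ of the suitably normalized transfer-matrix cocycle of $H_{\overline{\lambda},\alpha,x}$ is constant on $\Sigma_{\overline{\lambda},\alpha}$ and equal to $\mathcal L_{\overline{\lambda}}>0$; the $\mathrm{SL}(2,\RR)$-normalization is legitimate because $c_{\overline{\lambda}}$ has no zero on $\RR$, which is where the hypotheses $\lambda\in\mathrm{II}$ and non-self-duality are used.

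Next I would invoke the Thouless formula for EHM,
\begin{equation*}
L(E)=\int_{\RR}\ln\abs{E-E'}\dd\mathcal N_{\overline{\lambda},\alpha}(E')-\int_0^1\ln\abs{c_{\overline{\lambda}}(x)}\dd x,
\end{equation*}
which exhibits $L$ and $\mathcal N_{\overline{\lambda},\alpha}$, up to additive constants, as boundary values of conjugate harmonic functions on the upper half-plane; by the boundedness of the conjugate-function operator on the Hölder class $C^{1/2}$ it is then enough to show that $E\mapsto L(E)$ lies in $C^{1/2}$. On $\Sigma_{\overline{\lambda},\alpha}$ one has $L\equiv\mathcal L_{\overline{\lambda}}$ constant, while on each spectral gap the cocycle is uniformly hyperbolic and $L$ is real-analytic; the only place regularity can be lost is at the spectral edges, where the geometry of the cocycle (a reflection of the quantization of the acceleration) forces $L$ to leave the plateau with a square-root profile, $L(E)-\mathcal L_{\overline{\lambda}}\asymp\operatorname{dist}(E,\Sigma_{\overline{\lambda},\alpha})^{1/2}$, with $L'$ inside a gap of size $\lesssim\operatorname{dist}(\cdot,\partial\Sigma_{\overline{\lambda},\alpha})^{-1/2}$, hence integrable. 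Combining the plateau, the square-root edge behaviour and this integrability — for nearby $E_1,E_2$ one routes the estimate through a point of $\Sigma_{\overline{\lambda},\alpha}$ lying between them, staying within a single gap when there is none — yields the $\tfrac12$-Hölder bound for $L$, and so for $\mathcal N_{\overline{\lambda},\alpha}$.

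The step that genuinely requires the quantitative hypothesis $\mathcal L_{\overline{\lambda}}>C\beta(\alpha)$ and the work \cite{SY} is the \emph{uniformity} of the edge estimate over the infinitely many spectral gaps, whose endpoints accumulate densely in $\Sigma_{\overline{\lambda},\alpha}$. For a Liouville frequency the gaps generated by resonances with $\|n\alpha\|_{\mathbb{R}/\mathbb{Z}}$ abnormally small could a priori carry square-root constants that degenerate, and to rule this out I would use the large deviation estimates and the Avalanche Principle for $\tfrac1n\ln\|M_n(x,\cdot)\|$ at the scales governed by the continued-fraction denominators $q_k$ of $\alpha$, in the form established in \cite{SY}, where the Liouville loss is kept below $\mathcal L_{\overline{\lambda}}$ precisely because $\mathcal L_{\overline{\lambda}}>C\beta(\alpha)$. (Alternatively the whole argument can be run through the fibered rotation number, using $\mathcal N_{\overline{\lambda},\alpha}=1-2\rho_{\overline{\lambda},\alpha}$ together with the quantitative almost reducibility of the cocycle dual to the localization estimates of \cite{SY}; there the exponent $\tfrac12$ arises from the square-root opening of gaps as the energy is varied.)

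The hard part will be exactly this uniform spectral-edge analysis in the presence of a Liouville frequency — that is where essentially all of the difficulty lies, and where \cite{SY} and the gap condition $\mathcal L_{\overline{\lambda}}>C\beta(\alpha)$ do their work. The remaining ingredients — the duality reduction, the Thouless formula with its $\int_0^1\ln\abs{c_{\overline{\lambda}}}$ correction (reflecting that EHM is not of Schrödinger type, and that $c_{\overline{\lambda}}$ must be shown nonvanishing before normalizing), the constancy of $L$ on the spectrum, and the conjugate-function passage from $L$ to $\mathcal N_{\overline{\lambda},\alpha}$ — are classical. Finally, getting the exponent exactly $\tfrac12$, rather than a smaller power or merely the log-Hölder bound of Craig--Simon, is forced by the sharp square-root nature of the spectral-edge behaviour, in agreement with the known optimal regularity for the almost Mathieu operator.
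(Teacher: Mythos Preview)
Your primary route --- pass by duality to region~$\mathrm I$, show the Lyapunov exponent $L$ of the dual cocycle is $\tfrac12$-H\"older via a uniform square-root edge profile, then transfer to $\mathcal N$ by Privalov --- is \emph{not} what the paper does, and it carries a real gap. The paper never leaves region~$\mathrm{II}$: it works with the cocycle $(\alpha,\overline A_{\lambda,E})$, whose Lyapunov exponent \emph{vanishes} on the spectrum. For $E\in\Sigma_{\lambda,\alpha}$ the dual localization estimates of \cite{SY} are fed into an explicit conjugacy $\Phi$ almost reducing $\overline A_{\lambda,E}$ to a rotation (Theorem~\ref{amr}); one then rescales $\Phi$ by a diagonal $D=\mathrm{diag}(d^{-1},d)$ with $d=\|\Phi\|\,\epsilon^{1/4}$ and checks directly that $\|\Phi'^{-1}(\cdot+\alpha)\overline A_{\lambda,E'}\Phi'\|\le 1+C_\star\epsilon^{1/2}$ for $|E'-E|=\epsilon$ in a range covering all small $\epsilon$, whence $\mathcal L_\lambda(E')\le C_\star\epsilon^{1/2}$. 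The Thouless identity in the one-line form
\[
\mathcal L_\lambda(E+i\epsilon)-\mathcal L_\lambda(E)=\tfrac12\int\ln\Bigl(1+\tfrac{\epsilon^2}{(E-E')^2}\Bigr)\,\mathrm d\mathcal N_{\lambda,\alpha}(E')\ \ge\ \tfrac{\ln 2}{2}\bigl(\mathcal N_{\lambda,\alpha}(E+\epsilon)-\mathcal N_{\lambda,\alpha}(E-\epsilon)\bigr)
\]
then finishes the argument. No Hilbert transform, no gap-edge analysis.

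The obstacle in your plan is the uniform square-root estimate at the infinitely many edges in the positive-LE regime for Liouville $\alpha$. The inputs that \cite{SY} actually supplies --- exponential decay of dual eigenfunctions and the transfer-matrix bounds recorded here as Lemmas~\ref{te} and \ref{ustar} --- feed the almost-reducibility machine in region~$\mathrm{II}$; they are not large-deviation/Avalanche-Principle control of $L$ in region~$\mathrm I$, and even under strong Diophantine hypotheses the positive-LE technology (Goldstein--Schlag, Tao--Voda \cite{TV2017}) has so far produced only $(\tfrac12-\epsilon)$-H\"older. Getting the sharp $\tfrac12$ that way for Liouville $\alpha$ would need an idea you have not supplied. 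There is also a concrete slip: for $\overline\lambda\in\mathrm I$ one has $\overline\lambda_2>\overline\lambda_1+\overline\lambda_3$ only when $\lambda_1+\lambda_3<1$, which region~$\mathrm{II}$ does not force, so $c_{\overline\lambda}$ can vanish on $\RR$ and your ``$\mathrm{SL}(2,\RR)$-normalization is legitimate'' step fails in general --- it is $c_\lambda$, with $\lambda\in\mathrm{II}$, that is zero-free (cf.\ Lemma~\ref{lema}). Your parenthetical ``alternative'' --- quantitative almost reducibility dual to the localization of \cite{SY}, read off on the rotation-number side --- is in fact precisely the paper's proof; promote it from an aside to the main argument.
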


Consequently, we also have
\begin{theorem}\label{main2}
Assume the conditions of Theorem \ref{holder} hold.
Then for any $\epsilon>0$, there exists $\sigma_{\star}=\sigma_{\star}(\lambda,\alpha,\epsilon)>0$  such that for all $E\in\Sigma_{\lambda,\alpha}$ and $ \sigma\in(0,\sigma_{\star})$, we have
\begin{equation*}\label{hom}
\mathrm{Leb}\left((E-\sigma,E+\sigma)\cap\Sigma_{\lambda ,\alpha}\right)\geq(1-\epsilon)\sigma,
\end{equation*}
where $\mathrm{Leb}(\cdot)$ is the Lebesgue measure.
\end{theorem}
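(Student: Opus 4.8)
The plan is to deduce Carleson homogeneity of $\Sigma_{\lambda,\alpha}$ from the $\frac12$-H\"older regularity of $\mathcal N_{\lambda,\alpha}$ by way of the Green's function, and then to upgrade the homogeneity constant to $1-\epsilon$ using the gap structure of the model. Write $\Omega:=\overline{\mathbb C}\setminus\Sigma_{\lambda,\alpha}$ and let $g_\Omega(\cdot,\infty)$ be its Green's function with pole at $\infty$. The structural input, inherited from the setting of \cite{SY}, is that for $\lambda\in\mathrm{II}$ — which is Aubry dual to the positive–Lyapunov–exponent region $\mathrm{I}$, hence subcritical — the Lyapunov exponent $L_{\lambda,\alpha}(\cdot)$ of $H_{\lambda,\alpha,x}$ vanishes identically on $\Sigma_{\lambda,\alpha}$ and is strictly positive at every real energy outside it. Combining this with the Thouless formula, the (complexified) Lyapunov exponent is a nonnegative subharmonic function, harmonic off $\Sigma_{\lambda,\alpha}$, equal to $0$ exactly on $\Sigma_{\lambda,\alpha}$, and $\sim\ln|z|$ near $\infty$; hence it coincides with $g_\Omega(\cdot,\infty)$, and in particular, for any $E_0\in\Sigma_{\lambda,\alpha}$,
\begin{equation*}
 g_\Omega(z,\infty)=\int_{\mathbb R}\ln\frac{|z-E'|}{|E_0-E'|}\,\mathrm d\mathcal N_{\lambda,\alpha}(E').
\end{equation*}

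First I would convert Theorem~\ref{holder} into a growth bound for $g_\Omega$ near $\Sigma_{\lambda,\alpha}$. Given $z$ with $\delta:=\mathrm{dist}(z,\Sigma_{\lambda,\alpha})$ small and a nearest point $E_0$, one has $|z-E'|\le\delta+|E_0-E'|$, so the integrand above is at most $\ln(1+\delta/|E_0-E'|)$. Splitting the integral over $\{|E'-E_0|\le\delta\}$ and the dyadic shells $\{2^k\delta<|E'-E_0|\le 2^{k+1}\delta\}$, $k\ge0$, and using $\mathcal N_{\lambda,\alpha}((E_0-t,E_0+t))\le 2C_\star\sqrt t$ from Theorem~\ref{holder} (an integration by parts on the first piece, the bound $\ln(1+x)\le x$ and a geometric series on the rest), one obtains a constant $C'=C'(\lambda,\alpha)$ with
\begin{equation*}
 g_\Omega(z,\infty)\le C'\,\mathrm{dist}(z,\Sigma_{\lambda,\alpha})^{1/2}
\end{equation*}
in a neighbourhood of $\Sigma_{\lambda,\alpha}$. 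By Carleson's characterization of homogeneous sets — a compact $K\subset\mathbb R$ is homogeneous precisely when its Green's function obeys such a $\frac12$-H\"older bound — this already gives that $\Sigma_{\lambda,\alpha}$ is homogeneous with some positive constant.

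To reach the sharp constant $1-\epsilon$ I would bring in the gap structure. By the gap–labelling theorem each bounded gap of $\Sigma_{\lambda,\alpha}$ carries a label $\{k\alpha\}$ with $k\in\mathbb Z\setminus\{0\}$, distinct gaps carrying distinct labels, and in region $\mathrm{II}$ under $\mathcal L_{\overline\lambda}>C\beta(\alpha)$ the gap labelled $\{k\alpha\}$ has width at most $Ce^{-c|k|}$ for a rate $c$ comparable to $\mathcal L_{\overline\lambda}$ (a quantitative Dry--Ten--Martini estimate available from the dynamical analysis behind Theorem~\ref{holder}). Fix $E_0\in\Sigma_{\lambda,\alpha}$ and $\sigma$ small. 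Since $E_0\in\Sigma_{\lambda,\alpha}$, no gap straddles $E_0$, so the gaps meeting $(E_0-\sigma,E_0+\sigma)$ lie one side or the other of $E_0$, and their labels lie in an arc of length $|\mathcal N_{\lambda,\alpha}(E_0+\sigma)-\mathcal N_{\lambda,\alpha}(E_0-\sigma)|\le 2C_\star\sqrt\sigma$. A gap of width $\gtrsim\epsilon\sigma$ has $|k|\lesssim\tfrac1c\ln(1/\sigma)$, and one checks, using the three–distance theorem for $\{k\alpha\}_{|k|\le K}$ together with $\|q\alpha\|\ge e^{-(\beta(\alpha)+o(1))q}$, that no two such labels can lie within $2C_\star\sqrt\sigma$ of each other once $\sigma$ is small (this is exactly where $\mathcal L_{\overline\lambda}>C\beta(\alpha)$, i.e.\ $c$ large compared with $\beta(\alpha)$, is needed); hence the window contains at most one gap of width $\gtrsim\epsilon\sigma$, contributing at most $\sigma$ to the spectral deficiency. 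The remaining gaps have width $\lesssim\epsilon\sigma$, hence large label index, and a dyadic sum of $Ce^{-c|k|}$ over the labels falling in the arc bounds their total width by $o(\sigma)$. Therefore $\mathrm{Leb}\big((E_0-\sigma,E_0+\sigma)\setminus\Sigma_{\lambda,\alpha}\big)\le\sigma+o(\sigma)$, which is the claimed bound for $\sigma<\sigma_\star(\lambda,\alpha,\epsilon)$.

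The main difficulty will be twofold. The identification of the complexified Lyapunov exponent with $g_\Omega(\cdot,\infty)$ — and with it the passage from a measure-theoretic statement about $\mathcal N_{\lambda,\alpha}$ to a geometric one about $\Sigma_{\lambda,\alpha}$ — hinges on $L_{\lambda,\alpha}$ being \emph{constant} on the spectrum, i.e.\ on the subcriticality of the cocycle throughout region $\mathrm{II}$; this is the essential ingredient supplied by Aubry duality and \cite{SY}, and without it the Thouless potential need not be the Green's function and the argument collapses. The other delicate point is the quantitative input for the sharp constant: one needs both the exponential gap–width bound and the anti–concentration of $\{k\alpha\}$ at scale $\sqrt\sigma$, and it is precisely the interplay of the rate $c$ with $\beta(\alpha)$, codified by $\mathcal L_{\overline\lambda}>C\beta(\alpha)$, that must be balanced correctly — the potential-theoretic step alone yields only homogeneity with an unspecified constant.
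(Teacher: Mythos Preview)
Your gap-counting step is the paper's approach, but there is a logical error in your execution. You write ``the remaining gaps have width $\lesssim\epsilon\sigma$, hence large label index'' --- this implication is backward. The bound $|G_k|\le Ce^{-c|k|}$ is only an \emph{upper} bound on the gap length, so small width does not force large $|k|$; a gap with small label can perfectly well be tiny. Relatedly, your threshold $K\sim\tfrac1c\ln(1/\sigma)$ is chosen from the gap-width side, and with that choice $\sum_{|k|>K}Ce^{-c|k|}\sim e^{-cK}\sim\sigma$, not $o(\sigma)$, so the tail is not negligible and the final inequality does not close. The paper reverses the logic and draws the threshold from the H\"older side: take $m_0\in\mathcal R(E,\sigma)$ with $|m_0|$ minimal; for every other $m\in\mathcal R(E,\sigma)$ the $\tfrac12$-H\"older bound on $\mathcal N_{\lambda,\alpha}$ gives $\|(m-m_0)\alpha\|_{\mathbb R/\mathbb Z}\le C_\star\sqrt\sigma$, while $|m-m_0|\le 2|m|$, so the small-divisor estimate $\|q\alpha\|_{\mathbb R/\mathbb Z}\ge C(\alpha)e^{-\frac32\beta(\alpha)|q|}$ forces $|m|\ge\frac{-\ln(C_\star\sigma)}{6\beta(\alpha)}$ directly (this is Lemma~\ref{hl2}). \emph{Then} the exponential gap bound yields $\sum_{m\ne m_0}|G_m|\lesssim(C_\star\sigma)^{\mathcal L_{\overline\lambda}/(6C\beta(\alpha))}\le\epsilon\sigma$ for $\sigma$ small, and it is exactly the hypothesis $\mathcal L_{\overline\lambda}>C\beta(\alpha)$ that makes the exponent exceed $1$. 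No three-distance theorem or dyadic sum over the arc is needed; your anti-concentration step was in fact already proving this stronger statement (at most one label of small modulus in the arc), you just did not use it that way.

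The potential-theoretic first step --- identifying the complexified Lyapunov exponent with $g_\Omega(\cdot,\infty)$ and invoking a Carleson-type characterization --- is a detour that, as you acknowledge, yields only an unspecified homogeneity constant; the paper dispenses with it entirely and goes straight to the gap count.
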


Let us recall some history about the regularity of the IDS for one-frequency quasi-periodic operators first.
On one hand, we consider in the regime of positive Lyapunov exponent. In \cite{GS2001}£¬ Goldstein-Schlag proved H\"older continuity of the IDS for quasi-periodic Schr\"odinger operator with large analytic potential and Diophantine frequency. Later, Bourgain \cite {B2000} showed the IDS for almost Mathieu operator is $(\frac{1}{2}-\epsilon)$-H\"older continuous (for any small $\epsilon>0$) if the coupling $\lambda_2$ is small and the frequency is Diophantine. Recently, Tao-Voda \cite {TV2017} dealt with quasi-periodic Jacobi operators and obtained especially that the IDS for EHM is $(\frac{1}{2}-\epsilon)$-H\"older continuous if the Lyapunov exponent is positive and the frequency is strong Diophantine. On the other hand, in the subcritical regime Amor \cite{Amor} proved that the IDS for Schr\"odinger operator with Diophantine frequency and small (in perturbative sense) analytic potential is $\frac{1}{2}$-H\"older continuous. After that,  Avila-Jitomirskaya \cite{AJ2010} got $\frac{1}{2}$-H\"older continuity of the IDS for almost Mathieu operator for $\lambda_2\neq\pm1$ and Diophantine frequency and they also obtained $\frac{1}{2}$-H\"older continuity of the IDS for Schr\"odinger operator with small (in non-perturbative sense) analytic potential and Diophantine frequency. Subsequently, Avila-Jitomirskaya \cite{AJCMP} established $\frac{1}{2}$-H\"older continuity of the spectral measures for Schr\"odinger operator with small analytic potential and Diophantine frequency. Note that all above mentioned results are in  Diophantine frequency case and You-Zhang \cite {YZ} extended Goldstein-Schlag's results to weak Liouville frequency case.  In \cite{LYJMP} Liu-Yuan  improved Avila-Jitomirskaya's results to Liouville frequency case.  In a recent work  by Cai-Chavaudret-You-Zhou \cite{CCYZ}, they proved $\frac{1}{2}$-H\"older continuity of the IDS for Schr\"odinger operator with small (perturbative) finitely differentiable potential and Diophantine frequency.

There are also many works on the Carleson homogeneity of the spectrum for quasi-periodic operators. In continuous quasi-periodic Schr\"odinger operator with Diophantine frequency case, Damanik and Goldstein \cite{dam1} set up Carleson homogeneity of the spectrum for small analytic potential. Later, in the regime of positive Lyapunov exponent Goldsein-Damanik-Schlag-Voda \cite{gold2015} proved Carleson homogeneity of the spectrum for quasi-peiodic Schr\"odinger operator with Diophantine frequency. In \cite{gold2016}, Goldstein-Schlag-Voda got the Carleson homogeneity of the spectrum for Diophantine multi-frequency quasi-periodic Schr\"odinger operator. Recently, in the subcritical regime Leguil \cite{Leguil} obtained Carleson homogeneity  of the spectrum for quasi-periodic Schr\"odinger operator with Diophantine frequency.  Actually, we remark that  all these results are attached to Diophantine frequency and Liu-Shi in \cite{LS} extended Leguil's results to Liouville frequency case. In \cite{FL}, Fillman-Lukic established Carleson homogeneity of the spectrum for limit-periodic Schr\"odinger operator.

The present paper is organized as follows. In section 2, we give some
basic concepts and notations. In section 3, we will prove $\frac{1}{2}$-H\"older continuity of the IDS by establishing some quantitative almost reducibility results. The proof of Theorem \ref{main2} is included in Appendix B.

\section{Some basic concepts and notations}

\subsection{Cocycle, transfer matrix and Lyapunov exponent} Let $\alpha\in\mathbb{R}\setminus \mathbb{Q}$ and $C^{\omega}(\mathbb{R}/\mathbb{Z}, \mathcal{B})$ be the set of all analytic mappings from $\mathbb{R}/\mathbb{Z}$ to some Banach space $(\mathcal{B},\|\cdot\|)$. By a cocycle, we mean a pair $(\alpha,A)\in (\mathbb{R}\setminus\mathbb{Q})\times C^{\omega}(\mathbb{R}/\mathbb{Z},{\rm SL}(2,\mathbb{R}))$
and we can regard it as a dynamical system  on $(\mathbb{R}/\mathbb{Z})\times \mathbb{R}^2$ with
\begin{equation*}
(\alpha,A):(x,v)\longmapsto (x+\alpha, A(x)v),\ (x,v)\in (\mathbb{R}/\mathbb{Z})\times \mathbb{R}^2.
\end{equation*}
For $k>0$, we
define  the $k$-step transfer matrix as
\begin{equation*}
A_k(x)=\prod\limits_{l=k}^{1}A(x+(l-1)\alpha),
\end{equation*}
and the Lyapunov exponent for $(\alpha,A)$ as
\begin{equation*}
\mathcal{L}(\alpha,A)=\lim_{k\to +\infty  }\frac{1}{k}\int_{\mathbb{R}/\mathbb{Z}}\ln\|A_k(x)\|\mathrm{d}x
=\inf_{k>0}\frac{1}{k}\int_{\mathbb{R}/\mathbb{Z}}\ln\|A_k(x)\|\mathrm{d}x.
\end{equation*}

\subsection{Spectral measures and the IDS}\label{ids}
Let $H$ be a bounded self-adjoint operator on $\ell^2(\mathbb{Z})$. Then $(H-z)^{-1}$ is analytic in $\mathbb{C}\setminus \Sigma(H)$, where $\Sigma(H)$ is the spectrum of $H$, and we have for $f\in \ell^2(\mathbb{Z})$,
$$\Im \langle (H-z)^{-1}f,f\rangle=\Im z\cdot\|(H-z)^{-1}f\|_{\ell^2(\mathbb{Z})}^2, $$
where $\langle\cdot,\cdot\rangle$ is the usual inner product in $\ell^2(\mathbb{Z})$. Thus
$\phi_f(z)=\langle (H-z)^{-1}f,f\rangle$
is an analytic function in the upper half plane with $\Im\phi_{f}\geq 0$ ($\phi_f$ is the so-called Herglotz function).
Therefore,  one has a representation
\begin{equation*}
\phi_f(z)=\int_{\mathbb{R}}\frac{1}{x-z}\text{d}\mu^{f}(x),
\end{equation*}
where $\mu^f$ is the spectral measure associated to vector $f$. Alternatively, for any Borel set $\Omega\subseteq\mathbb{R}$,
\begin{equation*}
\mu^f(\Omega)=\langle\mathbb{E}(\Omega)f,f\rangle,
\end{equation*}
where $\mathbb{E}$ is the corresponding spectral projection of $H$.

Denote by $\mu^f_{\lambda, \alpha, x}$ the spectral measure of the operator $H_{\lambda, \alpha, x}$ and vector $f$ as above with $||f||_{\ell^2(\mathbb{Z})}=1$. The IDS $\mathcal{N}_{\lambda, \alpha}: \mathbb{R}\rightarrow [0,1]$ is obtained by averaging the spectral measure $\mu^{f}_{\lambda, \alpha, x}$ with respect to $x$, i.e.,
\begin{equation*}
\mathcal{N}_{\lambda,\alpha}(E)=\int_{\mathbb{R}/\mathbb{Z}}\mu^{f}_{\lambda,\alpha,x}(-\infty,E]\mathrm{d}x.
\end{equation*}
It is a continuous, non-decreasing surjective function and the definition is independent of the choice of $f$.

\subsection{Gap labelling and IDS}
 Each  connected component of  $[E_{\text{min}},E_{\text{max}}]\setminus \Sigma_{\lambda ,\alpha}$ is called a spectral gap, where $ E_{\text{min}} =\min\{E: E\in \Sigma_{\lambda,\alpha}\} $ and $ E_{\text{max}} =\max\{E: E\in \Sigma_{\lambda,\alpha}\}$. By the well-known gap labelling theorem \cite{JM1982,Delyon1983The}, for every spectral gap $G$ there exists unique nonzero integer $m$ such that $\mathcal{N}_{\lambda,\alpha}|_{G}=m\alpha \mod{\mathbb{Z}}$ and
\begin{equation}\label{sgap}
[E_m^-,E_m^+]=\{E_{\text{min}}\leq E\leq E_{\text{max}}:\mathcal{N}_{\lambda,\alpha}(E)=m\alpha\mod{\mathbb{Z}}\}.
\end{equation}

\subsection{Extended Harper's cocycle}
  Recalling (\ref{h1}), for $c(x)\neq 0$ the equation $$H_{\lambda,\alpha,x}u=Eu$$ is equivalent to
  \begin{equation*}
  \left(
           \begin{array}{c }
             u_{k+1} \\
             u_k\\
           \end{array}
         \right)=A_{\lambda,E}(x+k\alpha)\left(
           \begin{array}{c }
             u_{k} \\
             u_{k-1}\\
           \end{array}
         \right),
  \end{equation*}
  where $A_{\lambda,E}(x)=\frac{1}{c(x)}\left[
 \begin{array}{cc}
 E-2\cos2\pi x&-\overline{c}(x-\alpha)\\
c(x)&0\end{array}
\right]$.

Since in general, $A_{\lambda,E}(x)\notin \mathrm{SL}(2,\mathbb{R})$, we need make a few modifications and consider the ``renormalized'' $\mathrm{SL}(2,\mathbb{R})$-cocycle
\begin{eqnarray*}\overline{A}_{\lambda,E}(x)&=&\frac{1}{\sqrt{|c|(x)|c|(x-\alpha)}}\left[
 \begin{array}{cc}
 E-2\cos2\pi x&-|c|(x-\alpha)\\
|c|(x)&0\end{array}
\right]\\
&=&Q_{\lambda}(x+\alpha)A_{\lambda,E}(x)Q^{-1}_{\lambda}(x),
\end{eqnarray*}
 where $|c|(x)=\sqrt{c(x)\overline{c}(x)}$
 \begin{footnote}{$\overline{c}(x)$ is the complex conjugate of $c(x)$ for $x\in\mathbb{R}/\mathbb{Z}$ and its analytic extension for $x\notin\mathbb{R}$.}
 \end{footnote}
 and $Q_\lambda,Q^{-1}_\lambda$ are analytic in $\{x\in\mathbb{C}/\mathbb{Z}:|\Im x|\leq \frac{\mathcal{L}_{\overline{\lambda}}}{4\pi}\}$ if $\mathcal{L}_{\overline{\lambda}}\geq 5\beta(\alpha)$ (see Lemma \ref{lema} of the Appendix for details). We call $(\alpha, \overline{A}_{\lambda,E})$ the extended Harper's cocycle and denote by $\mathcal{L}_{\lambda}(E)=\mathcal{L}(\alpha,\overline{A}_{\lambda,E})$ its Lyapunov exponent. Actually, there is a direct definition of the Lyapunov exponent $\mathcal{L}(\alpha,A_{\lambda,E})$ for $(\alpha,A_{\lambda,E})$ (see \cite{JiM1} for details) and $\mathcal{L}_{\lambda}(E)=\mathcal{L}(\alpha,A_{\lambda,E})$ (ignoring the dependence on $\alpha$).


The Thouless formula relates the Lyapunov exponent to the integrated density of states,
\begin{equation}\label{thouless}
\mathcal{L}_{\lambda}(E)=-\int_{\mathbb{R}/\mathbb{Z}}\ln|c_{\lambda}(x)|\mathrm{d}x
+\int_{\mathbb{R}}\ln|E'-E|\mathrm{d}\mathcal{N}_{\lambda,\alpha}(E').
\end{equation}



\subsection{Aubry duality}
The map $\sigma:\lambda=(\lambda_1,\lambda_2,\lambda_3) \to\overline{\lambda}=(\frac{\lambda_3}{\lambda_2},\frac{1}{\lambda_2},\frac{\lambda_1}{\lambda_2})$ induces the duality between region $\mathrm{I}$ and region $\mathrm{II}$, and we call $H_{\overline{\lambda},\alpha,x}$ the Aubry duality of $H_{\lambda,\alpha,x}$. We have $\Sigma_{\lambda,\alpha}=\lambda_2\Sigma_{\overline{{\lambda}},\alpha}$ for $\alpha\in\mathbb{R}\setminus\mathbb{Q}$.

 Aubry duality expresses an algebraic relation between the families of
operators $ \{H _{\overline{\lambda},\alpha,x}\}_{x\in\mathbb{R}} $  and $ \{ {H} _{\lambda ,\alpha,x}\}_{x\in\mathbb{R}} $
by  Bloch waves, i.e., if
 $ u:\mathbb{R}/\mathbb{Z}\rightarrow\mathbb{C}$ is an $L^2$  function whose Fourier coefficients  $\hat u$ satisfy
  $ H _{\overline{\lambda},\alpha,\theta}\hat{u}=\frac{E}{\lambda_2}\hat{u}$,
then there exist $\theta\in \mathbb{R}$, such that
  $U(x)= \left(
           \begin{array}{c }
             e^{2\pi i \theta }u(x) \\
             u(x-\alpha)\\
           \end{array}
         \right)$
satisfies
\begin{equation}\label{dualre}
A_{\lambda,E}(x)\cdot U(x)=e^{2\pi i \theta}U(x+\alpha).
\end{equation}




\subsection{Some notations}
We briefly comment on  the constants and norms in the following proofs.  Let $C(\alpha)$ be  a large constant depending  on $\alpha$   and $C_{\star}$ (resp. $c_{\star}$)  be a large (resp. small) constant   depending on $\lambda$ and $\alpha$. Define the strip $\Delta_s=\{z\in\mathbb{C}/\mathbb{Z}: |\Im{z}|<s\}$ and let $\|v\|_{s}=\sup\limits_{z\in\Delta_s}\|v(z)\|$, where $v$ is a mapping from $\Delta_s$ to some Banach space $(\mathcal{B},\|\cdot\|)$. In this paper, $ \mathcal{B}$ may be
$\mathbb{C},\ \mathbb{C}^2$ or $ \text{SL}(2,\mathbb{C})$.

\section{$\frac{1}{2}$-H\"older continuity of the IDS}

In this section we will prove the $\frac{1}{2}$-H\"older continuity of the IDS for EHM. To this end, one needs to establish quantitative (almost) reducibility results for the extended Harper's cocycle. Let us begin with some useful definitions and lemmata.
\begin{definition}
Fix $\theta\in\mathbb{R},\epsilon_0>0$. We call $n\in\mathbb{Z}$ an $\epsilon_0$-resonance of $\theta$ if  $$\min\limits_{|k|\leq|n|}{\|2\theta-k\alpha\|_{\mathbb{R}/\mathbb{Z}}}=\|2\theta-n\alpha\|_{\mathbb{R}/\mathbb{Z}}\leq e^{-\epsilon_0 |n|}.$$
\end{definition}
Given $\theta\in\mathbb{R}$, we order all the $\epsilon_0$-resonances of $\theta$ as $0<|n_1|\leq|n_2|<\cdots$. We say $\theta$ is $\epsilon_0$-resonant if the set of all $\epsilon_0$-resonances of $\theta$ is infinite and $\epsilon_0$-non-resonant for otherwise. Supposing $\{0,n_1,\cdots,n_j\}$ is the set of all $\epsilon_0$-resonances of $\theta$, we let $n_{j+1}=\infty$.

\begin{lemma}[Theorem 3.3 of \cite{AJ2010}]\label{arl1}
Let $E\in\Sigma_{\lambda,\alpha}$. Then there exist $\theta=\theta(E)\in\mathbb{R}$ and solution $u$ of $H_{\overline{\lambda},\alpha,\theta}u=\frac{E}{\lambda_2}u$ with $u_0=1,|u_k|\leq1$.
\end{lemma}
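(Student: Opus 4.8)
The statement is essentially a reformulation of the Aubry duality machinery combined with a classical a priori bound on generalized eigenfunctions, so the plan is to reduce everything to known facts about the dual operator $H_{\overline\lambda,\alpha,\theta}$ living in region $\mathrm{I}$. First I would recall that $\lambda\in\mathrm{II}$ implies $\overline\lambda=\sigma(\lambda)\in\mathrm{I}$, which is precisely the regime of positive Lyapunov exponent for the extended Harper's cocycle; by the identity $\Sigma_{\lambda,\alpha}=\lambda_2\Sigma_{\overline\lambda,\alpha}$, the assumption $E\in\Sigma_{\lambda,\alpha}$ is equivalent to $\tfrac{E}{\lambda_2}\in\Sigma_{\overline\lambda,\alpha}$. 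So it suffices to produce, for a spectral value $\widetilde E:=E/\lambda_2$ of the family $\{H_{\overline\lambda,\alpha,\theta}\}_\theta$, some phase $\theta$ and a formal solution $u$ of $H_{\overline\lambda,\alpha,\theta}u=\widetilde E u$ with $u_0=1$ and $|u_k|\le 1$ for all $k$.

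The key input is Schnol-type / Bloch-wave theory: since $\widetilde E$ lies in the spectrum of $H_{\overline\lambda,\alpha,\theta}$ for Lebesgue-a.e.\ $\theta$ (the spectrum being $\theta$-independent for irrational $\alpha$), and since the spectral measures are supported in the spectrum, for a.e.\ $\theta$ there exists a polynomially bounded generalized eigenfunction at energy $\widetilde E$. More precisely, one invokes the standard fact (this is exactly Theorem~3.3 of \cite{AJ2010}, whose proof rests on the subordinacy/Gilbert–Pearson theory together with the observation that $\widetilde E\in\Sigma$ forces a non-trivial solution that does not grow too fast on at least one half-line, which by the transfer-matrix structure $u_{k+1}=A_{\overline\lambda,\widetilde E}(\theta+k\alpha)u_k$ can be continued to a global solution) that there is a choice of $\theta$ for which a solution $u$ exists with $\sup_k|u_k|<\infty$. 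Rescaling so that $\sup_k|u_k|=1$ — which we may do since $u$ is not identically zero and the equation is linear — and then, if necessary, re-indexing/translating the phase $\theta\mapsto\theta+k_0\alpha$ so that the supremum is attained (or approached) at $k=0$, yields the normalization $u_0=1$, $|u_k|\le 1$. One should be a little careful that the supremum is genuinely attained: if not, pick $k_0$ with $|u_{k_0}|$ close to $1$, divide by $u_{k_0}$, translate, and absorb the harmless constant factor; alternatively cite the form in which \cite{AJ2010} already states the normalized conclusion.

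The main obstacle is not any computation but making sure the a priori boundedness of the solution is legitimately available in this setting. The honest route is simply to quote Theorem~3.3 of \cite{AJ2010} verbatim, since the extended Harper's cocycle in region $\mathrm{II}$ is exactly the class of Jacobi operators treated there (the renormalized $\mathrm{SL}(2,\mathbb R)$-cocycle $\overline A_{\lambda,E}$ and the relation $A_{\lambda,E}\cdot U=e^{2\pi i\theta}U(\cdot+\alpha)$ from \eqref{dualre} being the precise algebraic bridge). Thus in the writeup I would (i) state the duality reduction $E\in\Sigma_{\lambda,\alpha}\iff E/\lambda_2\in\Sigma_{\overline\lambda,\alpha}$, (ii) apply the cited theorem to the dual operator to obtain a bounded solution, (iii) normalize at the index where the sup is (essentially) attained to get $u_0=1$, $|u_k|\le1$, and (iv) note that this $u$ is exactly a solution of $H_{\overline\lambda,\alpha,\theta}u=\tfrac{E}{\lambda_2}u$ as required. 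The only thing to watch is the phase bookkeeping under the translation $\theta\mapsto\theta+k_0\alpha$, which is routine.
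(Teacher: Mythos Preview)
The paper gives no proof of this lemma; it simply records it as a citation of Theorem~3.3 of \cite{AJ2010}, and your proposal ultimately does the same. Your added heuristic sketch is reasonable, though the actual argument in \cite{AJ2010} proceeds via the characterization of the spectrum as the complement of the uniformly hyperbolic energies (a non-uniformly-hyperbolic $\mathrm{SL}(2,\mathbb{R})$ cocycle over a minimal base admits a phase with a bounded orbit, which after normalization and phase translation gives $u_0=1$, $|u_k|\le 1$), rather than through Sch'nol/Gilbert--Pearson subordinacy; since your bottom line is to quote the reference, this is immaterial.
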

Throughout this section we fix $E,\theta=\theta(E)$ and $u$ which are given by Lemma \ref{arl1}.

In the following, we let $C_{2}, C_{1}\ (C_{2}\gg C_{1})$ be  large absolute constants which are bigger than any positive absolute constant $C$. Moreover, we assume $\lambda\in \mathrm{II}$  and
$$ h=\frac{\mathcal{L}_{\overline{\lambda}}}{200\pi},\ \mathcal{L}_{\overline{\lambda}}>C_2\beta(\alpha).$$


From Theorem 3.3 in \cite{SY}, we have
\begin{equation}\label{are2}
|u_k|\leq C_{\star} e^{-2\pi h|k|}, \text{ for }3|n_j| <|k|<\frac{|n_{j+1}|}{3},
\end{equation}
where $\{n_j\}$ is the set of all $C^{2}_1\beta(\alpha)$-resonances of $\theta=\theta(E)$.


\begin{lemma}[Lemma 6.6 of \cite{SY}]\label{te}
 We have
\begin{equation}\label{liu3}
\sup_{0\leq k\leq  e^{\frac{hn}{20}}}\|\overline{A}_k\|_{\frac{h}{20}}\leq C_{\star}e^{C\beta(\alpha)n},
\end{equation}
where $\overline{A}_k(x)$ denotes the $k$-step transfer matrix of $(\alpha, \overline{A}_{\lambda,E})$ and $C>0$ is some absolute constant.
\end{lemma}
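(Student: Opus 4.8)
The plan is to establish (\ref{liu3}) via Aubry duality, using the almost-localized dual eigenfunction provided by Lemma \ref{arl1} to build an almost-diagonalization of the extended Harper's cocycle on the strip $\Delta_{h/20}$.

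Recall that $\overline A_{\lambda,E}$ is real on $\mathbb{R}/\mathbb{Z}$ and that, since $\mathcal{L}_{\overline\lambda}>C_2\beta(\alpha)\ge 5\beta(\alpha)$, the matrices $Q_\lambda,Q_\lambda^{-1}$ are analytic and bounded on $\{|\Im x|\le\mathcal{L}_{\overline\lambda}/4\pi\}=\Delta_{50h}\supset\Delta_{h/20}$, so $\overline A_k=Q_\lambda(\cdot+k\alpha)A_k\,Q_\lambda^{-1}$ is analytic on $\Delta_{h/20}$. By (\ref{dualre}) the solution $u$ of Lemma \ref{arl1}, viewed as the function $u(x)=\sum_k u_k e^{2\pi i kx}$, produces the Bloch vector $U(x)=(e^{2\pi i\theta}u(x),\,u(x-\alpha))^{\top}$ with $A_k(x)U(x)=e^{2\pi ik\theta}U(x+k\alpha)$; setting $\overline U:=Q_\lambda U$ and using the telescoped conjugacy relation gives $\overline A_k(x)\overline U(x)=e^{2\pi ik\theta}\overline U(x+k\alpha)$ on $\Delta_{h/20}$.

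Complex-conjugating this identity (legitimate because $\overline A_k$ is real on $\mathbb{R}/\mathbb{Z}$) and continuing analytically produces a second Bloch vector $V$ with $\overline A_k(x)V(x)=e^{-2\pi ik\theta}V(x+k\alpha)$. Wherever $\overline U,V$ are transverse, $B:=[\,\overline U\mid V\,]$ conjugates the cocycle to a constant rotation, $\overline A_k(x)=B(x+k\alpha)\,\mathrm{diag}(e^{2\pi ik\theta},e^{-2\pi ik\theta})\,B(x)^{-1}$, whence the $k$-independent bound
\[
\|\overline A_k(x)\|\ \le\ \frac{\|B(x+k\alpha)\|\,\|B(x)\|}{|\det B(x)|}\qquad(x\in\Delta_{h/20}).
\]
The heart of the argument is then to control $\|B\|$ from above and $|\det B|$ from below on $\Delta_{h/20}$. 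Since $\overline U$ is built from the $C_1^2\beta(\alpha)$-almost-localized $u$, the estimate (\ref{are2}) shows that $u$, hence $B$, is well behaved on $\Delta_{h/20}$ only up to a scale dictated by the $C_1^2\beta(\alpha)$-resonances of $\theta$, while $\det B$ — a Wronskian of the two Bloch solutions — tends to degenerate near those resonances. I would use (\ref{are2}) to show that in the range $0\le k\le e^{hn/20}$ only the resonances $n_j$ with $|n_j|\lesssim n$ are active, that each of them costs at most a factor $e^{O(\beta(\alpha)n)}$, and that the Liouville arithmetic of $\alpha$ (comparing the resonance spacing with the continued-fraction denominators $q_m$, for which $\|q_m\alpha\|\asymp e^{-\beta(\alpha)q_m}$) turns the exponent into $\beta(\alpha)n$ rather than something of size $hn$; this yields $\sup_{\Delta_{h/20}}\|B\|\le C_\star e^{C\beta(\alpha)n}$ and $\inf_{\Delta_{h/20}}|\det B|\ge C_\star^{-1}e^{-C\beta(\alpha)n}$, and substitution into the displayed inequality gives (\ref{liu3}).

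The hard part is exactly this last step: proving the quantitative lower bound on the Wronskian $\det B$ — the uniform transversality on the complex strip of the two Bloch solutions, which nearly coincide at the resonances of $\theta$ (the extreme case $2\theta\in\mathbb{Z}$ needing the customary separate treatment) — and carrying out the bookkeeping that matches the admissible number of steps $k\le e^{hn/20}$ with the relevant resonance scale $\lesssim n$ so that every error stays $\le C_\star e^{C\beta(\alpha)n}$. This is where the strong non-resonance hypothesis $\mathcal{L}_{\overline\lambda}>C_2\beta(\alpha)$, equivalently $h\gg\beta(\alpha)$, is consumed.
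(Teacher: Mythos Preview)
The paper does not supply a proof of this lemma; it is quoted from \cite{SY}. Your plan is the standard Avila--Jitomirskaya duality scheme, and is in spirit what \cite{SY} carries out.

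There is, however, a real gap in the sketch. You write $u(x)=\sum_k u_k e^{2\pi ikx}$ and assert the \emph{exact} Bloch relation $\overline A_k(x)\,\overline U(x)=e^{2\pi ik\theta}\,\overline U(x+k\alpha)$ on $\Delta_{h/20}$. But (\ref{are2}) guarantees exponential decay of $u_k$ only in the windows $3|n_j|<|k|<|n_{j+1}|/3$; near each resonance scale one merely has $|u_k|\le 1$ from Lemma \ref{arl1}. If $\theta$ is $C_1^{2}\beta(\alpha)$-resonant (infinitely many $n_l$), the full Fourier series need not converge anywhere off $\mathbb{R}$, so the function $u(x)$ and the exact diagonalization you invoke simply do not exist on $\Delta_{h/20}$. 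One must truncate, as the paper does with $I_2$: then $U^{I_2}_\star$ is analytic on $\Delta_{h/3}$, but the Bloch relation acquires the error $G_\star$ of (\ref{au})--(\ref{g}), and that error has to be telescoped through $k\le e^{hn/20}$ iterates without losing control. This is precisely where the cap on $k$ enters, and why (\ref{liu3}) is not a $k$-uniform statement.

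A second remark on strategy. You propose $B=[\,\overline U\mid V\,]$ with $V$ the conjugate Bloch solution and identify the Wronskian lower bound $\inf_{\Delta_{h/20}}|\det B|$ as ``the hard part''. Note that the construction used in this paper just after Lemma \ref{ustar} (and in \cite{AJ2010,SY}) sidesteps that step entirely by invoking Lemma \ref{arll}: from the single column $U^{I_2}_\star$ with the two-sided bound (\ref{are12})--(\ref{upp1}) one manufactures $B\in\mathrm{SL}(2,\mathbb{C})$ directly, so $\det B\equiv 1$ and the conjugated cocycle is merely upper-triangular (cf.\ (\ref{bab})). The Wronskian problem is thus traded for the lower bound (\ref{are12}) on $\|U^{I_2}_\star\|$, which is exactly what Lemma \ref{ustar} supplies. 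Your two-column route can in principle be pushed through, but it is not the one taken, and the Wronskian bound you would need is at least as hard as (\ref{are12}).
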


\begin{lemma}[Theorem 2.6 of \cite{AA2008}]\label{arll}
Given $\eta>0$, we let $U:\mathbb{C}/\mathbb{Z}\rightarrow \mathbb{C}^{2}$ be analytic in $\Delta_{\eta}$ and satisfy $\delta_1\leq\|U(x)\|\leq \delta_2^{-1} \ for\ \forall x\in\Delta_{\eta}$. Then there exists $B(x):\mathbb{C}/\mathbb{Z}\rightarrow \mathrm{SL}(2,\mathbb{C})$ being analytic in $\Delta_{\eta}$ with first column $U(x)$ and $\|B\|_{\eta}\leq C\delta_1^{-2}\delta_2^{-1}(1-\ln(\delta_1\delta_2))$, where $C>0$ is some absolute constant.
\end{lemma}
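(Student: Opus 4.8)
The plan is to reduce the assertion to a quantitative B\'ezout (corona) problem for the two components of $U$, and to solve that by the corona theorem with bounds (equivalently, by a weighted $\bar\partial$-estimate). Write $U=(u_1,u_2)^{\top}$. To produce $B\in\mathrm{SL}(2,\mathbb{C})$, analytic on $\Delta_\eta$, with first column $U$, it suffices to find $g_1,g_2$ holomorphic on $\Delta_\eta$ with $u_1g_1+u_2g_2\equiv1$; one then sets
\[
B=\begin{pmatrix}u_1&-g_2\\ u_2&g_1\end{pmatrix},
\]
which is analytic on $\Delta_\eta$, has $\det B\equiv1$ and first column $U$, and satisfies $\|B\|_\eta\le\|U\|_\eta+\max_j\|g_j\|_\eta$. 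Since $\delta_1\le\norm{U(x)}\le\delta_2^{-1}$ on $\Delta_\eta$, the rescaled functions $f_j=\delta_2u_j$ are holomorphic on $\Delta_\eta$ with $\|f_j\|_\eta\le1$ and $\abs{f_1}^2+\abs{f_2}^2\ge(\delta_1\delta_2)^2$ everywhere on $\Delta_\eta$ (and $\delta_1\delta_2\le1$) --- precisely the data of a corona problem with constant $\delta_1\delta_2$.

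Identifying $\Delta_\eta$ with the annulus $\{e^{-2\pi\eta}<\abs w<e^{2\pi\eta}\}$ via $w=e^{2\pi iz}$, the corona theorem with estimates (Carleson's theorem, extended to finitely connected planar domains) yields holomorphic $h_1,h_2$ on $\Delta_\eta$ with $f_1h_1+f_2h_2\equiv1$ and $\|h_j\|_\eta\le C(\delta_1\delta_2)^{-2}\bigl(1-\ln(\delta_1\delta_2)\bigr)$. Taking $g_j=\delta_2h_j$ then gives $u_1g_1+u_2g_2\equiv1$ and $\|g_j\|_\eta\le C\delta_1^{-2}\delta_2^{-1}\bigl(1-\ln(\delta_1\delta_2)\bigr)$; since $\|U\|_\eta\le\delta_2^{-1}$ is dominated by the right-hand side (we may assume $\delta_1\le1\le\delta_2^{-1}$, the case of interest), the claimed bound for $\|B\|_\eta$ follows. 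It is worth recalling the mechanism, which also gives a self-contained proof: starting from the smooth solution $p_1=\overline{u_1}/N$, $p_2=\overline{u_2}/N$ of $u_1p_1+u_2p_2\equiv1$, where $N=\abs{u_1}^2+\abs{u_2}^2\ge\delta_1^2$, a short computation gives $\bar\partial p_1=u_2\overline W/N^2$ and $\bar\partial p_2=-u_1\overline W/N^2$ with $W=u_2\partial u_1-u_1\partial u_2$ holomorphic; then $g_1:=p_1+hu_2$, $g_2:=p_2-hu_1$ are holomorphic and still solve $u_1g_1+u_2g_2\equiv1$ as soon as $h$ solves $\bar\partial h=-\overline W/N^2$ on $\Delta_\eta$, and solving this $\bar\partial$-equation with sharp $L^\infty$ control --- through a H\"ormander-type argument with a subharmonic weight built from $\ln N$, in the spirit of Wolff's proof --- produces the bound with its single logarithmic factor.

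The hard part is exactly this quantitative $\bar\partial$/corona step: extracting the sharp power $\delta_1^{-2}$ with merely logarithmic dependence on $\delta_1\delta_2$. A crude estimate of $\overline W/N^2$ that bounds $\abs W\le\delta_2^{-1}(\abs{\partial u_1}+\abs{\partial u_2})$ and uses a Cauchy bound for $\partial u_j$ costs a derivative --- hence a power of the strip width and an extra power of $\delta_1^{-1}$ --- and does not reach the stated estimate, so the corona machinery is genuinely needed and the logarithmic loss is intrinsic to it. (Strictly, the implied constant could depend on the conformal modulus of $\Delta_\eta$, i.e.\ on $\eta$; in the applications $\eta$ is a fixed fraction of $h$, and the estimate is recorded in the displayed form in \cite{AA2008}.)
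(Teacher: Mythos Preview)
The paper does not supply its own proof of this lemma; it is quoted verbatim as Theorem~2.6 of \cite{AA2008} and used as a black box. So there is no in-paper argument to compare against, only Avila's original.

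Your reduction to a two-generator corona/B\'ezout problem is exactly the mechanism behind the cited result: writing $B=\begin{pmatrix}u_1&-g_2\\ u_2&g_1\end{pmatrix}$ with $u_1g_1+u_2g_2\equiv1$, rescaling to corona data with lower bound $\delta_1\delta_2$, and invoking a Wolff-type $\bar\partial$ estimate to obtain $\|g_j\|_\eta\le C\delta_1^{-2}\delta_2^{-1}(1-\ln(\delta_1\delta_2))$. This is correct and is the route taken in \cite{AA2008}.

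The one point you flag yourself deserves emphasis. Passing to the annulus $\{e^{-2\pi\eta}<|w|<e^{2\pi\eta}\}$ and quoting a corona theorem ``for finitely connected domains'' risks a constant depending on the conformal modulus, hence on $\eta$, whereas the lemma asserts an \emph{absolute} $C$. Avila's argument avoids this by solving the $\bar\partial$-equation directly on the periodic strip with an explicit kernel (so the estimate is genuinely uniform in $\eta$); if you want a fully self-contained proof matching the stated bound, you should carry out that step rather than defer to a general corona statement on annuli. For the purposes of the present paper this is harmless anyway: the lemma is applied only with $\eta=C_1\beta(\alpha)$ or $\eta=h/3$, both fixed in terms of $\lambda,\alpha$, so any $\eta$-dependence would be absorbed into the constants $C_\star$.
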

For simplicity, we write $n=|n_j|<\infty$ and $N=|n_{j+1}|$ in the following.

Define $I_2=\left[-\lfloor\frac{N}{9}\rfloor,  \lfloor\frac{N}{9}\rfloor \right]$
and $$ U^{I_2}(x)=\left(\begin{array}{c}
                   e^{2\pi i\theta}\sum\limits_{k\in I_2}{u}_k e^{2\pi ikx} \\
                   \sum\limits_{k\in I_2} u_k e^{2\pi ik(x-\alpha)}
                 \end{array}\right),
$$
where $\lfloor x\rfloor$ denotes the integer part of $x\in\mathbb{R}$.
Suppose $U_{\star}^{I_2}(x)=Q_{\lambda}(x)\cdot U^{I_2}(x)$.  Recalling (\ref{dualre}) and (\ref{are2}), we have
\begin{equation}\label{au}
\overline{A}_{\lambda,E}(x)U_\star^{I_2}(x)=e^{2\pi i\theta}U_\star^{I_2}(x+\alpha)+G_{\star}(x),
\end{equation}
where
\begin{equation}\label{g}
\|G_{\star}\|_{\frac{h}{3}}\leq C_{\star}e^{-\frac{h}{10}N}.
\end{equation}
We have the following useful estimate.

\begin{lemma}[Lemma 6.6 in \cite{SY}]\label{ustar}
We have for $n>n(\lambda,\alpha)$,
\begin{equation}\label{are12}
\inf_{x\in \Delta_{\frac{h}{3}}}\|U_\star^{I_2}(x)\|\geq e^{-C\beta(\alpha) n},
\end{equation}
where $C>0$ is some absolute constant.
\end{lemma}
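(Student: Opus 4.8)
The statement to prove is the lower bound
$$\inf_{x\in\Delta_{h/3}}\|U_\star^{I_2}(x)\|\ge e^{-C\beta(\alpha)n}$$
for the truncated, renormalized Bloch wave. The plan is to transfer the problem, via the cocycle relation \eqref{au}, to a lower bound for a transfer-matrix application, and to control the truncation error and the matrix norm quantitatively. Since $U_\star^{I_2}=Q_\lambda U^{I_2}$ and $Q_\lambda$ together with $Q_\lambda^{-1}$ are analytic and bounded on the strip $\{|\Im x|\le \mathcal{L}_{\overline\lambda}/4\pi\}$ (which contains $\Delta_{h/3}$ since $h=\mathcal{L}_{\overline\lambda}/200\pi$), it suffices — up to a multiplicative constant $C_\star$ absorbed into $e^{-C\beta n}$ by enlarging $C$ — to bound $\inf_{x\in\Delta_{h/3}}\|U^{I_2}(x)\|$ from below. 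So the first step is to reduce everything to the untwisted vector $U^{I_2}$.

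**Iterating the cocycle relation.** The key mechanism is the following. Iterating \eqref{au} $m$ steps (for a suitable $m$, say of size a small multiple of $n$, certainly $m\le e^{hn/20}$ so that Lemma \ref{te} applies) gives
$$\overline{A}_m(x)U_\star^{I_2}(x)=e^{2\pi i m\theta}U_\star^{I_2}(x+m\alpha)+\sum_{l=0}^{m-1}e^{2\pi i(m-1-l)\theta}\overline{A}_{m-1-l}(x+(l+1)\alpha)G_\star(x+l\alpha).$$
By \eqref{g} and Lemma \ref{te}, the error sum is bounded on $\Delta_{h/20}$ by $m\cdot C_\star e^{C\beta n}\cdot C_\star e^{-\frac{h}{10}N}$, which is super-exponentially small in $N$ (recall $N=|n_{j+1}|\gg n$, and when $n$ is a genuine resonance $N$ is either infinite or at least $\ge e^{C_1^2\beta n}$, so this term is negligible compared to $e^{-C\beta n}$). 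Therefore, if $U_\star^{I_2}(x)$ were too small at some point $x_0\in\Delta_{h/3}$, then $U_\star^{I_2}(x_0+m\alpha)$ would have to be small as well, since $\|\overline{A}_m(x_0)\|\le C_\star e^{C\beta n}$ forces $\|\overline{A}_m(x_0)U_\star^{I_2}(x_0)\|\le C_\star e^{C\beta n}\|U_\star^{I_2}(x_0)\|$; more usefully, running the iteration backward, $\|U_\star^{I_2}(x_0)\|\ge \|\overline{A}_m(x_0)^{-1}\|^{-1}(\|U_\star^{I_2}(x_0+m\alpha)\|-\text{error})\ge C_\star^{-1}e^{-C\beta n}\|U_\star^{I_2}(x_0+m\alpha)\|-\text{negligible}$. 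Because $\overline{A}_m\in\mathrm{SL}(2,\mathbb{C})$, $\|\overline{A}_m^{-1}\|=\|\overline{A}_m\|$, so the same $e^{C\beta n}$ bound governs both directions. Propagating the smallness along the orbit $\{x_0+m\alpha\}$ and using the minimality/equidistribution of the irrational rotation would then force $U_\star^{I_2}$ to be uniformly small, hence $U^{I_2}$ uniformly small on $\Delta_{h/3}$.

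**Getting the contradiction from a fixed lower bound.** To close the argument I would establish a single, non-quantitative-in-$n$ lower bound that $U^{I_2}$ cannot beat: namely $U^{I_2}$ is a trigonometric polynomial agreeing with the full Bloch wave $U$ up to an exponentially small tail (by \eqref{are2}, the discarded modes $|k|\ge\lfloor N/9\rfloor$ contribute at most $C_\star e^{-2\pi h N/9}$ on $\Delta_{h/3}$), and the full $U$ satisfies $u_0=1$, so the $0$-th Fourier coefficient of the first component of $U^{I_2}$ has modulus $\ge 1-C_\star e^{-cN}\ge \tfrac12$. Hence $\int_{\mathbb{R}/\mathbb{Z}}\|U^{I_2}(x)\|^2\,dx\ge \tfrac14$, so $\sup_{x\in\mathbb{R}/\mathbb{Z}}\|U^{I_2}(x)\|\ge \tfrac12$, and thus there is a real point where $U_\star^{I_2}$ has norm $\ge c_\star$. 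Combining with the propagation estimate of the previous paragraph: choosing $m$ so that $x_0+m\alpha$ is within $e^{-C\beta n}$ (in the appropriate sense, using $\beta(\alpha)$ to bound how long one must wait for the rotation to come close — this is exactly where $\|m\alpha\|$ and hence $\beta(\alpha)$ enters, forcing $m$ of exponential size $e^{C\beta n}$ but still $\le e^{hn/20}$ since $h\gg\beta$) of that good real point, and using analyticity (a Cauchy/Bernstein estimate on $\Delta_{h/3}$ to control the discrepancy $\|U_\star^{I_2}(x_0+m\alpha)-U_\star^{I_2}(\text{good point})\|$), one concludes $\|U_\star^{I_2}(x_0)\|\ge e^{-C\beta n}$. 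Since $x_0$ was arbitrary in $\Delta_{h/3}$, this is the claim.

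**Main obstacle.** The delicate point is the bookkeeping of the three competing scales: the transfer-matrix growth $e^{C\beta n}$ (Lemma \ref{te}), the waiting time $m\sim e^{C\beta n}$ needed for the rotation orbit to return near a good point (controlled by $\beta(\alpha)$), and the requirement $m\le e^{hn/20}$ under which Lemma \ref{te} is valid — these are mutually consistent precisely because $\mathcal{L}_{\overline\lambda}>C_2\beta(\alpha)$ with $C_2$ large, i.e. $h\gg\beta$, but making all the exponents line up (and checking that the truncation error $e^{-hN/10}$ and the Bloch-tail error $e^{-cN}$ are both crushed by $e^{-C\beta n}$, using $N\gg n$) requires care. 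A secondary subtlety is that when $n=|n_j|$ is a true resonance one only knows $N=|n_{j+1}|>n$; one must verify the argument does not secretly need $N$ large, or invoke the resonance spacing $N\ge e^{C_1^2\beta n}$ to be safe. Everything else is routine: analyticity bounds on strips, $\mathrm{SL}(2,\mathbb{C})$ norm identities, and Fourier truncation.
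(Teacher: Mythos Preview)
The paper does not prove this lemma itself; it is quoted verbatim from \cite{SY}, so there is no in-paper argument to compare against. Your outline does capture the standard mechanism (iterate the approximate invariance \eqref{au}, control the iterates via Lemma~\ref{te}, and anchor at a point where $u_0=1$ forces nondegeneracy), and on the real axis the chain would close. But as written there is a genuine gap once $x_0\in\Delta_{h/3}$ has nonzero imaginary part.

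First, your ``good real point'' is inaccessible: since $\alpha\in\mathbb{R}$ the orbit $x_0+m\alpha$ never leaves the line $\Im x=\Im x_0$, so it cannot come within $e^{-C\beta n}$ of any point of $\mathbb{R}/\mathbb{Z}$ unless $\Im x_0$ is already that small. The natural fix --- noting that on every horizontal line the zeroth Fourier coefficient in the real variable is still $u_0=1$, so a good point exists on each line --- is available, but it exposes a second, quantitative obstruction. The Cauchy/Bernstein bound you invoke on $\Delta_{h/3}$ is of order $e^{O(hn)}$, not $e^{O(\beta n)}$: the near-resonance coefficients $|u_k|\le 1$ for $|k|\le 3n$ get amplified by $e^{2\pi|k|\cdot h/3}\le e^{2\pi hn}$ on that strip (this is precisely why \eqref{upp1} is only asserted on $\Delta_{C_1\beta}$). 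Hence the closeness you need is $e^{-O(hn)}$, forcing a return time $m\gtrsim e^{O(hn)}$, which overshoots the range $m\le e^{hn/20}$ in which Lemma~\ref{te} applies. Moreover Lemma~\ref{te} itself only bounds $\|\overline{A}_k\|_{h/20}$, so even your propagation step is unjustified on $\Delta_{h/3}\setminus\Delta_{h/20}$. Your argument does close on the narrower strip $\Delta_{C_1\beta}$ (which is in fact all the paper needs downstream), but the stated bound on the wider strip $\Delta_{h/3}$ needs extra input from \cite{SY} --- presumably sharper almost-localization control of the modes $|u_k|$ with $|k|\le 3n$, rather than the crude $|u_k|\le 1$ you used.
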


We now turn to the upper bound. From (\ref{are2}) and the definition of $u$ in Lemma \ref{arl1}, one has
\begin{equation}\label{upp1}
\begin{aligned}
\|U_\star^{I_2}(x)\|_{C_{1}\beta(\alpha)} \leq&\  C_{\star}\sum_{|k|\leq 3n}|{u}_{k}|e^{2\pi C_{1}\beta(\alpha)|k|} +C_{\star}\sum_{ 3n < |k|\leq \frac{N}{9}}|{u}_{k}|e^{2\pi C_{1}\beta(\alpha)|k|}\\
\leq&\  C_{\star}e^{CC_{1}\beta(\alpha)n}.
\end{aligned}
\end{equation}

The purpose of the following is to construct quantitative almost reducibility (in  $\mathrm{SL}(2,\mathbb{C})$) results. Suppose now $B(x)$ is as in Lemma \ref{arll}  with $U(x)=U^{I_2}_\star(x)$ and $\eta=C_1\beta(\alpha)$. Then from (\ref{ustar}) (\ref{upp1}) and Lemma \ref{arll}, we obtain
\begin{equation}\label{b}
\|B\|_{C_{1}\beta(\alpha)},\|B^{-1}\|_{C_{1}\beta(\alpha)}\leq C_{\star}e^{CC_{1}\beta(\alpha)n}.
\end{equation}
More precisely, by letting  $B(x)=(U^{I_2}_\star(x),V(x))$ and recalling (\ref{au}), we have
\begin{equation*}
\begin{aligned}
\overline{A}_{\lambda,E}(x)B(x)&=\left[e^{2\pi i\theta}U^{I_2}_\star(x+\alpha)+G_{\star}(x),\overline{A}_{\lambda,E}(x)V(x)\right]\\
        &=B(x+\alpha)\left[
                       \begin{array}{cc}
                         e^{2\pi i\theta} & 0 \\
                         0 & e^{-2\pi i\theta} \\
                       \end{array}
                     \right]
          +\left[G_{\star}(x),\overline{A}_{\lambda,E}(x)V(x)-e^{-2\pi i\theta}V(x+\alpha)\right].
\end{aligned}
\end{equation*}
In other words,
\begin{equation}\label{bab}
\begin{aligned}
B^{-1}(x+\alpha)\overline{A}_{\lambda,E}(x)B(x)=\left[
                       \begin{array}{cc}
                         e^{2\pi i\theta} & 0 \\
                         0 & e^{-2\pi i\theta} \\
                       \end{array}
                      \right]
                 +\left[
                       \begin{array}{cc}
                         \beta_{1}(x) & b(x) \\
                         \beta_{2}(x) & \beta_{3}(x) \\
                       \end{array}
                      \right].
\end{aligned}
\end{equation}
From (\ref{g}) and (\ref{b}), we get
\begin{equation}\label{bt1}
\|\beta_{1}\|_{C_{1}\beta(\alpha)}, \|\beta_{2}\|_{C_{1}\beta(\alpha)} \leq C_{\star}e^{-\frac{h}{20}N},
\end{equation}
and
\begin{equation}\label{bc}
\|b\|_{C_{1}\beta(\alpha)}\leq C_{\star}e^{CC_{1}\beta(\alpha)n}.
\end{equation}
By taking determinant on (\ref{bab}) and noting $\overline{A}_{\lambda,E},B \in \mathrm{SL}(2,\mathbb{C})$, one has
\begin{equation}\label{bt3}
\|\beta_{3}\|_{C_{1}\beta(\alpha)}\leq \|b\|_{C_{1}\beta(\alpha)}\|\beta_{2}\|_{C_{1}\beta(\alpha)} + \|\beta_{1}\|_{C_{1}\beta(\alpha)} \leq C_{\star}e^{-\frac{h}{30}N}.
\end{equation}

Actually, one can obtain the following refinement.
\begin{theorem}\label{amr}
Under the previous assumptions, there exists $\Phi(x):\mathbb{C}/\mathbb{Z}\rightarrow \mathrm{SL}(2,\mathbb{C})$ being analytic in $\Delta_{\frac{1}{2}{C_{1}\beta(\alpha)}}$ with
 $\|\Phi\|_{\frac{1}{2}{C_{1}\beta(\alpha)}}\leq C_{\star}e^{CC_{1}\beta(\alpha)n}$
 such that
\begin{equation}\label{are10}
\Phi^{-1}(x+\alpha)\overline{A}_{\lambda,E}(x)\Phi(x)=\left[\begin{array}{cc}e^{2\pi i\theta}&0\\
0&e^{-2\pi i\theta}\end{array}\right]+\left[\begin{array}{cc}\beta'_1{(x)}&b'(x)\\
\beta'_2{(x)}&\beta'_3{(x)}\end{array}\right]
\end{equation}
with
\begin{equation}\label{beta'}
\|\beta'_1\|_{\frac{1}{2}C_{1}\beta(\alpha)},\|\beta'_2\|_{\frac{1}{2}C_{1}\beta(\alpha)},
\|\beta'_3\|_{\frac{1}{2}C_{1}\beta(\alpha)}\leq C_{\star}e^{-\frac{h}{50}N},
\end{equation}
and
\begin{equation}
 \|b'\|_{\frac{1}{2}C_{1}\beta(\alpha)}\leq C_{\star}e^{-\frac{1}{20}C^2_{1}\beta(\alpha)n}.
\end{equation}
\end{theorem}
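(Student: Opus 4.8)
The plan is to upgrade the crude almost-conjugation \eqref{bab} into the refined one \eqref{are10} by a single off-diagonal averaging step, very much in the spirit of one KAM iteration, but carefully tracking the radii of analyticity since we must shrink from $C_1\beta(\alpha)$ down to $\frac12 C_1\beta(\alpha)$. The point is that in \eqref{bab} the only "large" error term is the off-diagonal entry $b(x)$, which is of size $e^{CC_1\beta(\alpha)n}$, whereas $\beta_1,\beta_2,\beta_3$ are already exponentially small in $N$. So I would look for a near-identity conjugation $\Phi(x)=B(x)(I+P(x))$ with $P$ upper-triangular nilpotent, i.e. $P(x)=\left[\begin{smallmatrix}0&p(x)\\0&0\end{smallmatrix}\right]$, chosen so that $p$ solves the cohomological equation $e^{2\pi i\theta}p(x)-e^{-2\pi i\theta}p(x+\alpha)=-b(x)$, i.e. $p(x+\alpha)=e^{4\pi i\theta}p(x)+e^{2\pi i\theta}b(x)$. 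In Fourier, $\hat p(k)=e^{2\pi i\theta}\hat b(k)/(e^{2\pi i(2\theta-k\alpha)}-1)$. The denominator is controlled precisely because, between consecutive resonances, $\|2\theta-k\alpha\|_{\mathbb{R}/\mathbb{Z}}\geq e^{-C_1^2\beta(\alpha)|k|}$ for all $|k|\le N$ (this is exactly what it means for $\{0,n_1,\dots,n_j\}$ to exhaust the $C_1^2\beta(\alpha)$-resonances up to $n_{j+1}=N$); combined with the decay $|\hat b(k)|\le \|b\|_{C_1\beta(\alpha)}e^{-2\pi C_1\beta(\alpha)|k|}$ coming from \eqref{bc} and analyticity, the series for $p$ converges on a slightly smaller strip, say $\Delta_{\frac12 C_1\beta(\alpha)}$, and one gets a bound of the shape $\|p\|_{\frac12 C_1\beta(\alpha)}\le C_\star e^{CC_1\beta(\alpha)n}$. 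The key quantitative gain is that this conjugation removes $b$ entirely at the cost of producing a \emph{new} off-diagonal term $b'$ which is a product of $p$ with the already-tiny $\beta_i$'s, plus genuinely higher-order terms; a careful bookkeeping gives $\|b'\|_{\frac12 C_1\beta(\alpha)}\le C_\star e^{-\frac{1}{20}C_1^2\beta(\alpha)n}$, with the exponent $\frac1{20}C_1^2\beta(\alpha)n$ arising from balancing the loss $e^{CC_1\beta(\alpha)n}$ against the resonance gap $e^{-C_1^2\beta(\alpha)n}$ at scale $|k|\sim n$ — here one uses $C_1$ large so that $C_1^2$ dominates $CC_1$.

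The concrete steps, in order, are as follows. First, I would set up the cohomological equation and solve it in Fourier, being explicit about which frequencies $k$ are "dangerous": only the resonant $n_i$ with $|n_i|\le n=|n_j|$ can make $e^{2\pi i(2\theta-k\alpha)}-1$ small, and for those we either truncate (since $b$ has exponential decay and $|n_i|\le n$ so the contribution of $|k|>n$ is $e^{-cC_1\beta(\alpha)n}$, harmless) or absorb the finitely many bad modes into the construction — in fact the cleanest route is: split $b=b_{\le n}+b_{>n}$; solve the cohomological equation only against $b_{\le n}$ restricted further to non-resonant $k$, and show the residual (the resonant modes of $b_{\le n}$, together with $b_{>n}$) is already of size $e^{-\frac1{20}C_1^2\beta(\alpha)n}$ by the resonance-gap estimate and decay, so it can be dumped into $b'$. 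Second, with $p$ in hand, define $\Phi(x)=B(x)(I+P(x))$, check $\Phi\in\mathrm{SL}(2,\mathbb{C})$ (automatic since $\det(I+P)=1$), and get $\|\Phi\|_{\frac12 C_1\beta(\alpha)}\le \|B\|_{C_1\beta(\alpha)}\cdot(1+\|p\|_{\frac12 C_1\beta(\alpha)})\le C_\star e^{CC_1\beta(\alpha)n}$ from \eqref{b}. Third, compute $\Phi^{-1}(x+\alpha)\overline{A}_{\lambda,E}(x)\Phi(x)=(I+P(x+\alpha))^{-1}\big(\mathrm{diag}(e^{2\pi i\theta},e^{-2\pi i\theta})+\left[\begin{smallmatrix}\beta_1&b\\\beta_2&\beta_3\end{smallmatrix}\right]\big)(I+P(x))$ and expand: the diagonal part stays $\mathrm{diag}(e^{2\pi i\theta},e^{-2\pi i\theta})$ up to $\beta_i$-corrections; the $b$-term is cancelled by design up to the residual described above; everything else is a product of at least one small factor ($p\cdot\beta_i$, $\beta_i$ alone, or $p^2\cdot(\text{something})$ from $(I+P)^{-1}=I-P$) — track these into $\beta'_1,\beta'_2,\beta'_3$ and $b'$. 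Fourth, collect bounds: $\|\beta'_i\|\le C_\star(e^{-\frac h{30}N}+e^{CC_1\beta(\alpha)n}e^{-\frac h{30}N})\le C_\star e^{-\frac h{50}N}$ (using $CC_1\beta(\alpha)n\ll \frac h{100}N$, which follows from $N=|n_{j+1}|\geq 3n$ being a \emph{resonance} scale $N\gg n$ together with $h=\mathcal{L}_{\overline\lambda}/200\pi \gg \beta(\alpha)$), and $\|b'\|\le C_\star e^{-\frac1{20}C_1^2\beta(\alpha)n}$ from the cohomological-equation residual estimate.

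The main obstacle I anticipate is the precise matching of constants in the two competing exponentials and the radius loss. One has to be careful that: (a) the radius $C_1\beta(\alpha)$ is \emph{wide enough} to give $b$ the decay $e^{-2\pi C_1\beta(\alpha)|k|}$ needed to beat the small-divisor denominators $e^{C_1^2\beta(\alpha)|k|}$ up to $|k|=n$ — this needs $C_1\beta(\alpha)$ larger than a fixed multiple of $C_1^2\beta(\alpha)\cdot\frac{n}{n}$... no: the resonance gap is $e^{-C_1^2\beta(\alpha)|k|}$ only for $|k|\le N$, and at the critical scale $|k|\sim n$ the loss is $e^{+C_1^2\beta(\alpha)n}$, so one genuinely needs the analyticity width to compensate, forcing us to pay the full $e^{-\frac12 C_1\beta(\alpha)\cdot 2\pi |k|}$ and accept the final bound $e^{-\frac1{20}C_1^2\beta(\alpha)n}$ rather than something better — getting the numerology ($\frac1{20}$, $\frac1{50}$, the halving of the strip) internally consistent with \eqref{bt1}, \eqref{bc}, \eqref{bt3} is the delicate part; (b) one must verify that $C_1$ being a large \emph{absolute} constant (not depending on $\lambda,\alpha$) suffices, so that the $C_\star$'s really only absorb $\lambda,\alpha$-dependence and the quadratic-in-$C_1$ gain in the exponent is honest. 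A secondary technical point is confirming $(I+P)^{-1}=I-P$ exactly (true here because $P^2=0$), which keeps the expansion finite and avoids any convergence issue in the conjugation itself — the only infinite series is the Fourier series defining $p$, whose convergence on $\Delta_{\frac12 C_1\beta(\alpha)}$ is the one estimate that must be done with care.
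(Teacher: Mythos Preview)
Your conjugation $\Phi=B\,(I+P)$ with $P$ upper--triangular nilpotent is exactly what the paper does (its $W$), and the cohomological equation you write down kills the non-resonant low modes of $b$ just as in the paper. But there is a genuine gap at the single resonant mode $k=n_j$, and this is where your argument does not go through.

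You assert that ``the resonant modes of $b_{\le n}$ \dots\ are already of size $e^{-\frac{1}{20}C_1^2\beta(\alpha)n}$ by the resonance-gap estimate and decay.'' Nothing you have established gives this. From \eqref{bc} and analyticity on $\Delta_{C_1\beta(\alpha)}$, Fourier decay yields only
\[
|\hat b_{n_j}|\le \|b\|_{C_1\beta(\alpha)}\,e^{-2\pi C_1\beta(\alpha)n}\le C_\star e^{-cC_1\beta(\alpha)n},
\]
which is \emph{linear} in $C_1$ in the exponent, not quadratic. Nor can you solve the cohomological equation at $k=n_j$: by the very definition of resonance one has $\|2\theta-n_j\alpha\|_{\mathbb R/\mathbb Z}\le e^{-C_1^2\beta(\alpha)n}$, so the denominator is tiny, not large. (Your sentence ``$\|2\theta-k\alpha\|\ge e^{-C_1^2\beta(\alpha)|k|}$ for all $|k|\le N$'' inverts the logic: the resonances $n_1,\dots,n_j$ are precisely the $k$ for which this \emph{fails}.) The same issue hits your tail: splitting at $|k|=n$ rather than $|k|=C_1n$, the high part $b_{>n}$ on $\Delta_{\frac12 C_1\beta(\alpha)}$ is only $O(e^{-cC_1\beta(\alpha)n})$, again not $O(e^{-cC_1^2\beta(\alpha)n})$.

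The paper gets the missing quadratic gain by a \emph{dynamical} argument you have not invoked. After conjugating by $\Phi=BW$ (which removes $b^l$ and leaves $b^r(x)=\hat b_{n_j}e^{2\pi i n_j x}$ plus tiny terms), it iterates the upper--triangular model $Z$ for $s\sim e^{\frac{1}{10}C_1^2\beta(\alpha)n}$ steps and uses that, since $\|2\theta-n_j\alpha\|$ is so small, the off-diagonal entries add coherently: $\|Z_s\|_0\ge \tfrac{2s}{\pi}|\hat b_{n_j}|$. On the other hand Lemma~\ref{te} (which is where the hypothesis $E\in\Sigma_{\lambda,\alpha}$ enters, via \cite{SY}) forces $\|\overline A_s\|_0\le C_\star e^{C\beta(\alpha)n}$ for such $s$. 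Comparing through $\|\Phi\|_0^2\le C_\star e^{CC_1\beta(\alpha)n}$ yields $|\hat b_{n_j}|\le C_\star e^{-\frac{1}{15}C_1^2\beta(\alpha)n}$. Without this spectral input there is no reason the resonant coefficient should be small; this is exactly the step where being on the spectrum is used, and it cannot be replaced by Fourier decay or small-divisor bookkeeping alone.
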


\begin{proof}
We assume $n>n(\lambda,\alpha)$,  otherwise this theorem is trivial. Recalling (\ref{bab}), we can
write $b(x)=b^{r}(x)+b^{l}(x)+b^{h}(x)$, where
 $b^{l}(x)=\sum_{|k|\leq C_{1}n,\text{ } k\neq n_j}\hat{b}_{k}e^{2\pi i kx}$, $b^{r}(x)=\hat{b}_{n_j}e^{2\pi i n_jx}$
and $b^{h}(x)=\sum_{|k|> C_{1}n}\hat{b}_{k}e^{2\pi i kx}$.
Then by (\ref{bc}),
\begin{equation}\label{bh}
\|b^{h}\|_{\frac{1}{2}C_{1}\beta(\alpha)}
                          \leq \sum_{|k|> C_{1}n} \|b\|_{C_{1}\beta(\alpha)}e^{-\pi C_{1}\beta(\alpha)|k|}\leq C_{\star}e^{-2C^2_{1}\beta(\alpha)n}.
\end{equation}

We then eliminate  the term $b^l(x)$ by solving some homological equation. From the definition of $\beta(\alpha)$ in (\ref{beta}),
we have the small divisor estimate
\begin{equation}\label{k alpha}
\|k\alpha\|_{\mathbb{R}/\mathbb{Z}}\geq C(\alpha)e^{-\frac{3}{2}\beta(\alpha)|k|},\ \ \text{for } k\neq 0.
\end{equation}
Together with the definition of $\epsilon_0$-resonance, one has for $|k|\leq C_{1}n$ and $k\neq n_j$,
\begin{equation}\label{lower}
\|2\theta-k\alpha\|_{\mathbb{R}/\mathbb{Z}} \geq \|(n_j-k)\alpha\|_{\mathbb{R}/\mathbb{Z}}-\|2\theta-n_j\alpha\|_{\mathbb{R}/\mathbb{Z}} \geq c_{\star}e^{-3C_{1}\beta(\alpha)n}.
\end{equation}
Let
$\hat{w}_{k}=-\hat{b}_{k}\frac{e^{-2\pi i\theta}}{1-e^{-2\pi i(2\theta-k\alpha)}}$
for $|k|\leq C_{1}n$ and $k\neq n_j$, and $\hat{w}_{k}=0$ for $|k|> C_{1}n$ or $k= n_j$.
Consequently, the function $w(x)=\sum\limits_{k\in\mathbb{Z}}\hat{w}_ke^{2\pi ik x}$ will satisfy $\|w\|_{\frac{1}{2}C_1\beta(\alpha)}\leq C_\star e^{CC_1\beta(\alpha)n}$ from (\ref{bc}) and (\ref{lower}). If we define
$$ W(x)=  \left[
               \begin{array}{cc}
                      1 & w(x) \\
                      0 & 1 \\
                      \end{array}
                        \right],
$$
then we must have
$$ W^{-1}(x+\alpha)\left[
                       \begin{array}{cc}
                         e^{2\pi i\theta} & b^{l}(x) \\
                         0 & e^{-2\pi i\theta} \\
                       \end{array}
                      \right] W(x)
                 =\left[
                       \begin{array}{cc}
                         e^{2\pi i\theta} &0 \\
                         0 & e^{-2\pi i\theta} \\
                       \end{array}
                      \right],
$$
and
\begin{equation}\label{we}
\|W\|_{\frac{1}{2}C_1\beta(\alpha)}\leq C_\star e^{CC_1\beta(\alpha)n}.
\end{equation}
We now set $\Phi(x)=B(x)W(x)$ and then $\|\Phi\|_{\frac{1}{2}C_1\beta(\alpha)}\leq C_\star e^{CC_1\beta(\alpha)n}$. By direct computation, we have
\begin{equation*}\label{b-1ab}
\begin{aligned}
\Phi^{-1}(x+\alpha)\overline{A}_{\lambda,E}(x)\Phi(x)= Z(x)+\Psi(x),
\end{aligned}
\end{equation*}
with
$$ Z(x)= \left[\begin{array}{cc}
                         e^{2\pi i\theta} & b^{r}(x) \\
                         0 & e^{-2\pi i\theta} \\
                       \end{array}
                      \right]
$$
and
$$\Psi(x)=\left[
             \begin{array}{cc}
             \beta'_{1}(x) & b^{h}(x) \\
             \beta'_{2}(x)  & \beta'_{3}(x)  \\
              \end{array}
          \right]
    =W^{-1}(x+\alpha)\left[
                       \begin{array}{cc}
                         \beta_{1}(x) & b^{h}(x) \\
                         \beta_{2}(x)  & \beta_{3}(x)  \\
                       \end{array}
                      \right] W(x).
$$
Hence we can obtain (\ref{beta'}) and
\begin{equation}\label{psi}
\|\Psi\|_{\frac{1}{2}C_1\beta(\alpha)}\leq C_\star e^{-C_{1}^2\beta(\alpha)n}
\end{equation}
from (\ref{bt1}), (\ref{bt3}), (\ref{bh}) and (\ref{we}).

Thus what remains is to estimate the term $b^r(x)$. For $s\in\mathbb{N}$, we set
\begin{equation*}
Z_{s}(x)=\prod_{k=s-1}^{0}Z(x+k\alpha)=\left[
                                         \begin{array}{cc}
                                           e^{2\pi is\theta} & b^{r}_{s}(x) \\
                                           0 & e^{-2\pi is\theta} \\
                                         \end{array}
                                       \right],
\end{equation*}
where
\begin{equation*}
b^{r}_{s}(x)=\hat{b}_{n_{j}}e^{2\pi i((s-1)\theta+n_{j}x)}\sum_{k=0}^{s-1}e^{-2\pi ik(2\theta-n_{j}\alpha)}.
\end{equation*}
Therefore,
\begin{equation*}
\|b^{r}_{s}\|_0
=\left|\hat{b}_{n_{j}}\frac{\sin \pi s(2\theta-n_{j}\alpha)}{\sin \pi (2\theta-n_{j}\alpha)}\right|
\end{equation*}
if $\sin \pi (2\theta-n_{j}\alpha)\neq 0$, and $\|b^{r}_{s}\|_0=s|\hat{b}_{n_{j}}|$ otherwise.
Noting that
$$2\|x\|_{\mathbb{R}/\mathbb{Z}}\leq\sin \pi\|x\|_{\mathbb{R}/\mathbb{Z}}\leq \pi\|x\|_{\mathbb{R}/\mathbb{Z}},$$
we have for
$0\leq s\leq \frac{1}{2}\|2\theta-n_{j}\alpha\|^{-1}_{\mathbb{R}/\mathbb{Z}}$,
$$\frac{2s}{\pi}|\hat{b}_{n_{j}}|\leq\|b^{r}_{s}\|_0\leq s|\hat{b}_{n_{j}}|.$$
Therefore, for $ 0\leq s\leq \frac{1}{2}\|2\theta-n_{j}\alpha\|^{-1}_{\mathbb{R}/\mathbb{Z}}$,
\begin{equation}\label{zs}
\frac{2s}{\pi}|\hat{b}_{n_{j}}| \leq \|Z_{s}\|_{0}\leq 1+s|\hat{b}_{n_{j}}|\leq C_{\star}(1+s)e^{CC_{1}\beta(\alpha)n}.
\end{equation}
Because of
\begin{equation*}
\begin{aligned}
&\Phi^{-1}(x+s\alpha)\overline{A}_{s}(x)\Phi(x) \\
=\ & Z_{s}(x)+\sum_{k=1}^{s}\ \sum_{s-1\geq j_{1}>j_{2}>\cdots>j_{k}\geq0} \Psi(x+j_{1}\alpha)
\cdots \Psi(x+j_{k}\alpha)\\
                 &\ \ \ \times Z_{s-1-j_{1}}(x+(j_{1}+1)\alpha)Z_{j_{1}-j_{2}-1}(x+(j_{2}+1)\alpha)\cdots Z_{j_{k}}(x)
\end{aligned}
\end{equation*}
and combing with (\ref{psi}) and (\ref{zs}), we have for $s\sim e^{\frac{1}{10}C_1^2\beta(\alpha)n}<\frac{1}{2}\|2\theta-n_j\alpha\|_{\mathbb{R}/\mathbb{Z}}^{-1}$,
\begin{equation*}
\begin{aligned}
\|\overline{A}_{s}\|_{0} \geq&\  \|\Phi\|_{0}^{-2}\left( \|Z_{s}\|_{0}-\sum_{k=1}^{s}\binom{s}{k}
                                                                     \|\Psi\|_{0}^{k} (\max_{0\leq j\leq s-1}\|Z_{j}\|_{0})^{1+k}
                                                  \right)\\
 \geq&\  \|\Phi\|_{0}^{-2}\left( \|Z_{s}\|_{0}-C_\star e^{\frac{1}{10}C_1^2\beta(\alpha)n}\sum_{k=1}^{s}
                                                                      \binom{s}{k}
                                                                     2^{k}e^{-\frac{1}{2}C_1^2\beta(\alpha)nk}                                      \right)\\
      \geq&\   \|\Phi\|_{0}^{-2}\left( \|Z_{s}\|_{0}-C_\star e^{\frac{1}{10}C_1^2\beta(\alpha)n}((1+2e^{-\frac{1}{2}C_1^2\beta(\alpha)n})^s-1)\right)\\                                                        \geq&\  c_{\star}e^{-CC_{1}\beta(\alpha)n}(\|Z_{s}\|_{0}-C_{\star}e^{-\frac{3}{10}C_{1}^{2}\beta(\alpha)n}).
\end{aligned}
\end{equation*}
Thus from Lemma \ref{te} and (\ref{zs}), we have for $s\sim e^{\frac{1}{10}C_1^2\beta(\alpha)n}$,
\begin{equation*}
|\hat{b}_{n_{j}}|\leq C_{\star}e^{-\frac{1}{15}C_{1}^{2}\beta(\alpha)n},
\end{equation*}
and hence
$$\|b'\|_{\frac{1}{2}C_{1}\beta(\alpha)}\leq C_{\star}e^{-\frac{1}{20}C^2_{1}\beta(\alpha)n}.$$
The proof is finished.
\end{proof}

Now we give the proof of Theorem \ref{holder}.

\begin{proof}[{\bf Proof of Theorem \ref{holder}}] If the energy $E$ is in the resolvent set, then $\mathcal{L}_{\lambda}$ is clearly Lipschitz continuous. Thus it is  suffice to consider the case $E\in\Sigma_{\lambda,\alpha}$.
Given $\epsilon>0$, we define $D=\left[
         \begin{array}{cc}
           d^{-1} & 0 \\
           0 & d \\
         \end{array}
       \right]
$ where $d=\|\Phi\|_{\frac{1}{2}C_{1}\beta(\alpha)}\epsilon^{\frac{1}{4}}$ and $\Phi$ is given by Theorem \ref{amr}.
Let $\Phi'(x)=\Phi(x)D$. If $\epsilon \leq c_{\star}e^{-C_{1}^2\beta(\alpha)n}$, we have
\begin{equation}\label{phi'}
\|\Phi'\|_{\frac{1}{2}C_{1}\beta(\alpha)}\leq C_{\star}\epsilon^{-\frac{1}{4}}.
\end{equation}
Set $B'(x)=\Phi'^{-1}(x+\alpha)\overline{A}_{\lambda,E}(x)\Phi'(x)$, then
\begin{equation*}
\begin{aligned}
B'(x)= &\left[
       \begin{array}{cc}
         e^{2\pi i\theta} & 0 \\
         0 & e^{-2\pi i\theta} \\
       \end{array}
     \right]
    + \left[
        \begin{array}{cc}
          \beta'_{1}(x) & d^{2}b'(x) \\
           d^{-2}\beta'_{2}(x) & \beta'_{3}(x) \\
        \end{array}
       \right]\\
\end{aligned}
\end{equation*}
with
\begin{equation*}
 \left\|\beta'_{1}\right\|_{\frac{1}{2}C_{1}\beta(\alpha)}, \left\|\beta'_{3}\right\|_{\frac{1}{2}C_{1}\beta(\alpha)}\leq C_{\star}e^{-\frac{1}{50}hN},
\end{equation*}
\begin{equation*}
\left\|d^2b'\right\|_{\frac{1}{2}C_{1}\beta(\alpha)}\leq C_{\star}e^{-\frac{1}{50}C_{1}^{2}\beta(\alpha)n}\epsilon^{\frac{1}{2}},
\end{equation*}
and
$$\left\|d^{-2}\beta'_{2}\right\|_{\frac{1}{2}C_{1}\beta(\alpha)}\leq C_{\star}e^{-\frac{1}{100}hN}\epsilon^{-\frac{1}{2}}.$$
If $\epsilon\geq C_{\star}e^{-\frac{1}{100}hN}$, then
 $$\left\|d^{-2}\beta'_{2}\right\|_{\frac{1}{2}C_{1}\beta(\alpha)} \leq C_{\star}\epsilon^{\frac{1}{2}},$$
and
\begin{equation}\label{b'}
\left\|B'\right\|_{\frac{1}{2}C_{1}\beta(\alpha)}\leq 1+ C_{\star}\epsilon^{\frac{1}{2}}.
\end{equation}
As a result, for $ C_{\star}e^{-\frac{1}{100}hN}\leq \epsilon \leq c_{\star}e^{-C_{1}^2\beta(\alpha)n}$,
\begin{equation*}
\mathcal{L}_\lambda(E)=\mathcal{L}(\alpha,B')\leq \ln\left\|B'\right\|_{\frac{1}{2}C_{1}\beta(\alpha)}
     \leq\ln\left( 1+ C_{\star}\epsilon^{\frac{1}{2}}\right)\leq C_{\star}\epsilon^{\frac{1}{2}}.
\end{equation*}

Define $$I_{j}:=\{\epsilon\in\mathbb{R}: C_{\star}e^{-\frac{1}{100}h|n_{j+1}|}\leq \epsilon \leq  c_{\star}e^{-C_{1}^2\beta(\alpha)|n_{j}|}\}.$$
Then for any small $\epsilon_{0}>0$, there exists $j_{0}\in \mathbb{Z^{+}}$ such that $[0,\epsilon_{0}]\subset \bigcup_{j\geq j_{0}}I_{j}$.
Let $\epsilon=|E-E'|\in [0,\epsilon_{0}]$ with $E'\in \mathbb{C}$.
Then by (\ref{phi'}) and (\ref{b'}), one has
\begin{equation*}
\begin{aligned}
  \mathcal{L}_\lambda(E')&= \ \mathcal{L}\left(\alpha,\Phi'^{-1}(x+\alpha)\overline{A}_{\lambda,E'}(x)\Phi'(x)\right)\\
   &\leq\  \ln\left\|B'+\Phi'^{-1}(x+\alpha)\left(\overline{A}_{\lambda,E'}(x)-\overline{A}_{\lambda,E}(x)\right)\Phi'(x)\right\|_{\frac{1}{2}C_1\beta(\alpha)}\\
  &\leq \ \ln\left(1+C_{\star}\epsilon^{\frac{1}{2}}\right) \leq C_{\star}\epsilon^{\frac{1}{2}}.
\end{aligned}
\end{equation*}
Hence,
\begin{equation} \label{lyae}
  |\mathcal{L}_\lambda(E') - \mathcal{L}_\lambda({E})|\leq C_\star|E'-E|^{\frac{1}{2}}.
\end{equation}

From the Thouless formula (\ref{thouless}),
we have
\begin{equation*}
\begin{aligned}
\left|\mathcal{L}\left(\alpha,\overline{A}_{\lambda,E+i\epsilon}\right)-\mathcal{L}_\lambda(E)\right|
   = &\ \frac{1}{2}\int\ln\left(1+\frac{\epsilon^{2}}{(E-E')^{2}}\right)\mathrm{d}\mathcal{N}_{\lambda,\alpha}(E')\\
   \geq &\ \frac{1}{2}\ln 2
   \left(\mathcal{N}_{\lambda,\alpha}(E+\epsilon)-\mathcal{N}_{\lambda,\alpha}(E-\epsilon)\right).
\end{aligned}
\end{equation*}
Thus recalling (\ref{lyae}), we obtain
$$\mathcal{N}_{\lambda,\alpha}(E+\epsilon)-\mathcal{N}_{\lambda,\alpha}(E-\epsilon) \leq C_{\star}\epsilon^{\frac{1}{2}},$$
which means precisely that $\mathcal{N}_{\lambda,\alpha}$ is $\frac{1}{2}$-H\"{o}lder continuous. This finishes the proof of Theorem \ref{holder}.
\end{proof}


\appendix
\section{}
\begin{lemma}\label{lema}
Let $0<\beta(\alpha)<\infty$ and $\lambda\in\mathrm{II}$. If $\mathcal{L}_{\overline{\lambda}}\geq 5\beta(\alpha)$, then there are analytic mapping $Q_{\lambda}$ from $\Delta_{\frac{\mathcal{L}_{\overline{\lambda}}}{4\pi}}$ to $M_2(\mathbb{C})$ and  its inverse $Q^{-1}_{\lambda}$ which is analytic in the same region, such that for all $x\in \Delta_{\frac{\mathcal{L}_{\overline{\lambda}}}{4\pi}}$,
$$Q_\lambda^{-1}(x+\alpha)A_{\lambda,E}(x)Q_{\lambda}(x)=\overline{A}_{\lambda,E}(x),$$
where $M_2(\mathbb{C})$ denotes the space of all $2\times 2$ complex matrices.
\end{lemma}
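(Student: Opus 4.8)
The plan is to construct $Q_\lambda$ explicitly from the factorization $\overline{A}_{\lambda,E}(x)=Q_\lambda(x+\alpha)A_{\lambda,E}(x)Q_\lambda^{-1}(x)$ that is forced by comparing the two matrices. Recall
$$
A_{\lambda,E}(x)=\frac{1}{c(x)}\left[\begin{array}{cc} E-2\cos2\pi x & -\overline{c}(x-\alpha)\\ c(x) & 0\end{array}\right],\qquad
\overline{A}_{\lambda,E}(x)=\frac{1}{\sqrt{|c|(x)|c|(x-\alpha)}}\left[\begin{array}{cc} E-2\cos2\pi x & -|c|(x-\alpha)\\ |c|(x) & 0\end{array}\right].
$$
A natural guess is the diagonal conjugation $Q_\lambda(x)=\mathrm{diag}\big(q(x),\,q(x-\alpha)\big)$ for a suitable scalar function $q$; then $Q_\lambda(x+\alpha)A_{\lambda,E}(x)Q_\lambda^{-1}(x)$ has $(1,1)$ entry $\frac{q(x+\alpha)}{q(x)}\cdot\frac{E-2\cos2\pi x}{c(x)}$, $(2,1)$ entry $\frac{q(x)}{q(x)}\cdot 1=1$... so instead one wants $Q_\lambda(x)=\mathrm{diag}(1,\,p(x))$-type or, more symmetrically, one checks that the choice making the $(2,1)$ entries match is $q(x+\alpha)/q(x-\alpha)=|c|(x)\big/\sqrt{|c|(x)|c|(x-\alpha)}=\sqrt{|c|(x)/|c|(x-\alpha)}$, which telescopes. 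Concretely I would set $Q_\lambda(x)=\mathrm{diag}\big(\varphi(x),\varphi(x-\alpha)\big)$ with $\varphi$ chosen so that $\varphi(x)/\varphi(x-\alpha)=\big(|c|(x)/c(x)\big)^{1/2}$ times an appropriate power, and verify the remaining three entries match automatically because $|c|(x)^2=c(x)\overline c(x)$ and $\det A_{\lambda,E}=\det\overline A_{\lambda,E}=1$. The upshot is that $Q_\lambda$ is built from $\big(\overline c(x)/c(x)\big)^{1/4}$ and its shift, i.e. the ``phase'' part of $c$.

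The real content is then the analyticity and invertibility claim. The function $c(x)=c_\lambda(x)=\lambda_1 e^{-2\pi i(x+\alpha/2)}+\lambda_2+\lambda_3 e^{2\pi i(x+\alpha/2)}$ has analytic extension to all of $\mathbb{C}/\mathbb{Z}$; writing $z=e^{2\pi i x}$, $c$ corresponds to a Laurent polynomial $\lambda_3 e^{\pi i\alpha} z+\lambda_2+\lambda_1 e^{-\pi i\alpha} z^{-1}$ whose two roots $z_\pm$ have product $\lambda_1/\lambda_3$ and sum $-\lambda_2 e^{-\pi i\alpha}/\lambda_3$. To take a square root (or fourth root) of $\overline c(x)/c(x)$ analytically on the strip $\Delta_{\mathcal{L}_{\overline\lambda}/(4\pi)}$, I must show that neither $c$ nor $\overline c$ vanishes there, i.e. that the roots $z_\pm$ lie outside the annulus $e^{-\mathcal{L}_{\overline\lambda}/2}<|z|<e^{\mathcal{L}_{\overline\lambda}/2}$. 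This is exactly where the definition
$$
\mathcal{L}_{\overline\lambda}=\ln\frac{\lambda_2+\sqrt{\lambda_2^2-4\lambda_1\lambda_3}}{\max\{\lambda_1+\lambda_3,1\}+\sqrt{\max\{\lambda_1+\lambda_3,1\}^2-4\lambda_1\lambda_3}}
$$
enters: for $\lambda\in\mathrm{II}$ one has $\lambda_2>\max\{\lambda_1+\lambda_3,1\}\ge 2\sqrt{\lambda_1\lambda_3}$, so $\lambda_2^2-4\lambda_1\lambda_3>0$, the roots are real-in-modulus-after-the-$e^{\pm\pi i\alpha}$-twist, and the larger modulus root has $|z|$ comparable to $\lambda_2/\lambda_3$ while the smaller has $|z|$ comparable to $\lambda_1/\lambda_2$; a direct computation shows the $z$-plane distance from the roots to the unit circle is governed precisely by $e^{\mathcal{L}_{\overline\lambda}/2}$. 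The condition $\mathcal{L}_{\overline\lambda}\ge 5\beta(\alpha)$ is not needed for the root location but for controlling the $\alpha$-shift: $Q_\lambda(x\pm\alpha)$ must also be analytic on the same strip, so one needs the strip of analyticity to survive translation by $\alpha$ with a small loss, and the hypothesis ensures the relevant fourth-root branch is well-defined on a slightly larger strip $\Delta_{\mathcal{L}_{\overline\lambda}/(4\pi)+\mathrm{Im}\,\alpha\text{-slack}}$; since $\alpha\in\mathbb{R}$ this slack is really about keeping $|c|(x)$ and $|c|(x-\alpha)$ simultaneously nonvanishing and the branch cuts disjoint, which is where a factor like $5$ versus $4$ in the denominators comes from.

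Therefore the steps, in order, are: (i) write down the explicit diagonal $Q_\lambda$ and verify algebraically that $Q_\lambda^{-1}(x+\alpha)A_{\lambda,E}(x)Q_\lambda(x)=\overline A_{\lambda,E}(x)$ using only $|c|^2=c\overline c$ and $\det=1$; (ii) factor $c_\lambda$ as a Laurent polynomial in $e^{2\pi i x}$ and locate its zeros; (iii) show that under $\lambda\in\mathrm{II}$ and $\mathcal{L}_{\overline\lambda}\ge 5\beta(\alpha)$ these zeros stay off the closed strip $\overline{\Delta}_{\mathcal{L}_{\overline\lambda}/(4\pi)}$ and in fact off a slightly wider strip, so that a holomorphic fourth-root branch of $\overline c/c$ exists there; (iv) deduce that $Q_\lambda$ and $Q_\lambda^{-1}$ are holomorphic on $\Delta_{\mathcal{L}_{\overline\lambda}/(4\pi)}$, noting $\det Q_\lambda$ never vanishes. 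I expect step (iii) — the quantitative zero-free estimate pinning the width $\mathcal{L}_{\overline\lambda}/(4\pi)$ to the formula \eqref{lya}, together with the bookkeeping of the $\alpha$-translated branch — to be the main obstacle; the rest is routine matrix algebra and elementary complex analysis. (This lemma is essentially the EHM analogue of the conjugation used by Avila–Jitomirskaya and Avila–You–Zhou for the almost Mathieu operator, so I would follow that template for the branch-of-root construction.)
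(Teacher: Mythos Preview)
Your proposal has a genuine gap: you have misidentified where the hypothesis $\mathcal{L}_{\overline{\lambda}}\ge 5\beta(\alpha)$ is used, and as a consequence your construction of $Q_\lambda$ cannot work as stated. Note that $\det A_{\lambda,E}(x)=\overline{c}(x-\alpha)/c(x)$ while $\det\overline{A}_{\lambda,E}(x)=1$, so any $Q_\lambda$ with $Q_\lambda^{-1}(x+\alpha)A_{\lambda,E}(x)Q_\lambda(x)=\overline{A}_{\lambda,E}(x)$ must satisfy the multiplicative cohomological equation
\[
\frac{\det Q_\lambda(x+\alpha)}{\det Q_\lambda(x)}=\frac{\overline{c}(x-\alpha)}{c(x)}.
\]
Taking logarithms (which is legitimate once you know $\Re c>0$ on the strip), this is an additive cohomological equation over the irrational rotation by $\alpha$. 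Solving it forces small divisors $e^{2\pi ik\alpha}-1$ into the Fourier coefficients, and with a Liouville $\alpha$ these decay no faster than $e^{-\frac{3}{2}\beta(\alpha)|k|}$. The solution is therefore analytic only on a strip narrower by an amount proportional to $\beta(\alpha)$, and the assumption $\mathcal{L}_{\overline\lambda}\ge 5\beta(\alpha)$ is exactly what guarantees that the resulting $Q_\lambda$ remains analytic on $\Delta_{\mathcal{L}_{\overline\lambda}/(4\pi)}$. A function built \emph{pointwise} from $(\overline{c}/c)^{1/4}$ and its real $\alpha$-shifts cannot solve this equation: no finite product of shifts of $c,\overline c$ has the right coboundary structure, because the phase of $c$ is not itself a coboundary in closed form.

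Your explanation that the $5\beta(\alpha)$ is about ``$\alpha$-shift slack'' or ``branch cuts disjoint'' is incorrect: since $\alpha\in\mathbb{R}$, translation by $\alpha$ preserves every horizontal strip exactly, so $c(\cdot)$ and $c(\cdot-\alpha)$ are zero-free on identical regions and no slack is needed for that. The paper's proof does your steps (ii)--(iii) essentially as you outline (showing $\Re c>0$ on $\Delta_{\mathcal{L}_{\overline\lambda}/(2\pi)}$, hence zero winding number and an analytic logarithm), but then the heart of the argument is to solve $2f(x+\alpha)-2f(x)=\log c(x)-\log\overline c(x)$ via Fourier series and the small-divisor bound~(\ref{k alpha}), after checking the zero-mean condition $\int_{\mathbb{R}/\mathbb{Z}}\arg c(x)\,\mathrm{d}x=0$. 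The conjugation is then $Q_\lambda(x)=e^{f(x)}\sqrt{|c|(x-\alpha)}\,\mathrm{diag}\big(1,\sqrt{\overline c(x-\alpha)/c(x-\alpha)}\big)$, and the factor $e^{f(x)}$ coming from the cohomological equation is precisely what your outline is missing.
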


\begin{proof}
Let
\begin{equation*}\begin{aligned}
\epsilon_{\star}=
\ \min\left\{{\frac{\lambda_2 +\sqrt{\lambda_2^2 -4\lambda_1 \lambda_3}}{2\lambda_1} , \ \frac{\lambda_2 +\sqrt{\lambda_2^2 -4\lambda_1 \lambda_3}}{2\lambda_3} }\right\}.
\end{aligned}\end{equation*}
Then as $\lambda_2>\lambda_1+\lambda_3$, we have for any $\epsilon\in\mathbb{R}$ with $|\epsilon|<\epsilon_\star$,

\begin{eqnarray}
\label {app1}&&\lambda_2 -(\lambda_1 e^{2\pi \epsilon}+\lambda_3 e^{-2\pi \epsilon})>0,\\
\label{app2}&&\lambda_2 -(\lambda_1 e^{-2\pi \epsilon}+\lambda_3 e^{2\pi \epsilon})>0.
\end{eqnarray}
Thus $\Re c(x+i\epsilon)>0, \Re \overline{c}(x+i\epsilon)>0$ for any $x\in\mathbb{R}/\mathbb{Z}$. We have showed that $c(x),\  \overline{c}(x)$  have no zeros on $\Delta_{\frac{\mathcal{L}_{\overline{\lambda}}}{2\pi}}$. Recalling (\ref{app1}) and (\ref{app2}) again, the rotation numbers of $c(x),\ \overline{c}(x)$ on $\Delta_{\frac{\mathcal{L}_{\overline{\lambda}}}{2\pi}}$ are identically vanishing. Consequently, there are single-valued analytic functions $g_1(x)=\log|c(x)|+i\arg c(x)$ and $g_2(x)=\log|\overline{c}(x)|+i\arg \overline{c}(x)$ on $\Delta_{\frac{\mathcal{L}_{\overline{\lambda}}}{2\pi}}$ such that $c(x)=e^{g_1(x)}, \ \overline{c}(x)=e^{g_2(x)}$.

Noting for $x\in\mathbb{R}/\mathbb{Z}$,
\begin{equation*}
\Re {c(1-\alpha-x)}
=\Re {c(x)}, \Im {c(1-\alpha-x)}=-\Im {c(x)},
\end{equation*}
then we have
\begin{equation*}\label{arg}
\begin{aligned}
\int_{\mathbb{R}/\mathbb{Z}}\arg {c(x)}\text{d}x =&\int_{-\frac{\alpha}{2}}^{\frac{1}{2}-\frac{\alpha}{2}}\text{arg }c(x)\text{d}x + \int_{\frac{1}{2}-\frac{\alpha}{2}}^{1-\frac{\alpha}{2}}\text{arg }c(x) \mathrm{d}x\\
=&-\int_{-\frac{\alpha}{2}}^{\frac{1}{2}-\frac{\alpha}{2}}\text{arg }c(1-\alpha-x)\text{d}x + \int_{\frac{1}{2} -\frac{\alpha}{2}}^{1-\frac{\alpha}{2}}\text{arg }c(x)\text{d}x\\
=&\ 0.
\end{aligned}
\end{equation*}
Similarly, $\int_{\mathbb{R}/\mathbb{Z}}\text{arg }\overline{c}(x)\text{d}x=0$. Hence $\widehat{g_1-g_2}(0)=\int_{\mathbb{R}/\mathbb{Z}}\left(g_1(x)-g_2(x)\right)\mathrm{d}x=0$  and  the function $f(x)=\sum\limits_{k\in\mathbb{Z}}\hat{f}_ke^{2\pi kix}$  will solve the equation
$$2f(x+\alpha)-2f(x)=g_1(x)-g_2(x),$$
where $\hat{f}_0=0$ and $\hat{f}_k=\frac{\widehat{g_1-g_2}(k)}{2(e^{2\pi ki\alpha}-1)}, k\neq0$.
Because of the small divisor estimate (\ref{k alpha}) and $\mathcal{L}_{\overline{\lambda}}\geq 5\beta(\alpha)$, $f(x)$ must be analytic on $\Delta_{\frac{\mathcal{L}_{\overline{\lambda}}}{4\pi}}$. Thus $c(x)=|c|(x)e^{f(x+\alpha)-f(x)}, \ \overline{c}(x)=|c|(x)e^{-f(x+\alpha)+f(x)}$ for all $x\in \Delta_{\frac{\mathcal{L}_{\overline{\lambda}}}{4\pi}}$.

 Let \[Q_\lambda(x)=e^{f(x)}\sqrt{|c|(x-\alpha)}
\left[
       \begin{array}{cc}
         1 & 0 \\
         0 & \sqrt{\frac{\overline{c}(x-\alpha)}{c(x-\alpha)}}\\
       \end{array}
     \right].\]
Then  the proof follows (the detail computations are similar to that of \cite{Han}).

\end{proof}

\section{Carleson Homogeneity: proof of Theorem \ref{main2}.}
In this appendix, we will complete the proof of Theorem \ref{main2} and this  follows from the H\"older continuity of the IDS together with the exponential decay of the lengths of the spectral gaps. For the convenience of readers, we include the details in the following.
\begin{lemma}[Theorem 1.1 of \cite{SY}]\label{gap}
Let $\alpha\in\mathbb{R}\setminus\mathbb{Q}$ with $0\leq\beta(\alpha)<\infty$ and $E_m^-,E_m^+$ be given by $\mathrm{(\ref{sgap})}$.  Then there exists absolute constant $C>1$ such that, if  $\lambda\in \mathrm{II}$ and $\mathcal{L}_{\overline{\lambda}}>C\beta(\alpha)$, one has for $|m|\geq m_{\star}$,
\begin{equation*}\label{upp}
E_m^+-E_m^-\leq  e^{-C^{-1}\mathcal{L}_{\overline{\lambda}}|m|},
\end{equation*}
where $m_\star$ is a positive constant only depending  on $\lambda,\alpha$  and $\mathcal{L}_{\overline{\lambda}}$ is given by $\mathrm{(\ref{lya})}$.
\end{lemma}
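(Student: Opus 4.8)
The plan is to specialize the almost--reducibility scheme of Section~3 to a gap edge, where it degenerates into an \emph{exact} reducibility to an upper--triangular cocycle whose single resonant off--diagonal Fourier coefficient controls the gap length, and then to show that this coefficient decays at rate $\asymp\mathcal{L}_{\overline\lambda}$. So first I would fix a gap $G=(E_m^-,E_m^+)$ with $|m|$ large, a gap edge $E\in\{E_m^-,E_m^+\}\subset\Sigma_{\lambda,\alpha}$, and $\theta=\theta(E),u$ as in Lemma~\ref{arl1}. By the gap labelling theorem \cite{JM1982,Delyon1983The}, $\mathcal{N}_{\lambda,\alpha}|_{G}\equiv m\alpha\pmod{\mathbb{Z}}$; since the fibered rotation number of $(\alpha,\overline{A}_{\lambda,E})$ is affinely related to the IDS, and --- by the Bloch--wave relation (\ref{dualre}) together with the non--vanishing of $U$ from Lemma~\ref{ustar} --- equals $\theta$ modulo $\mathbb{Z}$, one gets $2\theta\equiv m\alpha\pmod{\mathbb{Z}}$ (replacing $m$ by $-m$ if needed). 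Thus $\|2\theta-m\alpha\|_{\mathbb{R}/\mathbb{Z}}=0$: $m$ is a $C_1^2\beta(\alpha)$--resonance of $\theta$, and it is the \emph{last} one, because for $|k|>|m|$ the minimum $\min_{|j|\le|k|}\|2\theta-j\alpha\|_{\mathbb{R}/\mathbb{Z}}=0$ is attained at $j=m$ rather than at $j=k$. Hence in the notation preceding Theorem~\ref{amr} we are in the case $n=|m|$, $N=\infty$.

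Since $N=\infty$, the truncation $I_2$ is all of $\mathbb{Z}$, so $U^{I_2}$ is the genuine Bloch function of (\ref{dualre}), the remainder $G_\star$ in (\ref{au}) vanishes identically, and (\ref{bt1})--(\ref{bt3}) force $\beta_1\equiv\beta_2\equiv\beta_3\equiv0$. The construction underlying Theorem~\ref{amr} then produces, at this $E$, an exact conjugacy onto $\mathrm{diag}(e^{2\pi i\theta},e^{-2\pi i\theta})$ plus a pure upper--triangular term $b(x)$; eliminating the non--resonant modes $|k|\le C_1|m|$ of $b$ by solving the homological equation (small divisors bounded below by (\ref{lower})) and folding the conjugacy into $\Phi$, I reach the exact normal form
\[
\Phi^{-1}(x+\alpha)\,\overline{A}_{\lambda,E}(x)\,\Phi(x)=\begin{bmatrix}e^{2\pi i\theta}&\hat{b}_{m}e^{2\pi imx}+b^{h}(x)\\ 0&e^{-2\pi i\theta}\end{bmatrix},\qquad \|\Phi\|_0,\|\Phi^{-1}\|_0\le C_\star e^{CC_1\beta(\alpha)|m|},
\]
where $b^h$ collects the modes $|k|>C_1|m|$. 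Crucially, because $u$ now decays genuinely exponentially --- $|u_k|\le C_\star e^{-2\pi h|k|}$ for \emph{all} $|k|>3|m|$ by (\ref{are2}) with $h=\mathcal{L}_{\overline\lambda}/(200\pi)$ --- the vector $U^{I_2}_\star$ extends analytically to the strip $\Delta_{h/3}$, and together with the lower bound of Lemma~\ref{ustar} on that strip, Lemma~\ref{arll} produces $B$ there; consequently $\|b^h\|_0\le C_\star e^{-cC_1\mathcal{L}_{\overline\lambda}|m|}$.

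The core step --- the one I expect to be the main obstacle --- is $|\hat{b}_m|\le C_\star e^{-C^{-1}\mathcal{L}_{\overline\lambda}|m|}$. Iterating the normal form, the $(1,2)$--entry of its $s$--step product is
\[
e^{2\pi i(s-1)\theta}\Big(\hat{b}_m e^{2\pi imx}\sum_{j=0}^{s-1}e^{2\pi ij(m\alpha-2\theta)}+\sum_{j=0}^{s-1}e^{-4\pi ij\theta}\,b^h(x+j\alpha)\Big).
\]
Since $m\alpha-2\theta\in\mathbb{Z}$, the first sum is exactly $s\,\hat{b}_m e^{2\pi imx}$, which grows \emph{linearly} in $s$; and since every mode of $b^h$ has $|k|>C_1|m|$ and hence is non--resonant, $\|2\theta-k\alpha\|_{\mathbb{R}/\mathbb{Z}}=\|(m-k)\alpha\|_{\mathbb{R}/\mathbb{Z}}\ge C(\alpha)e^{-\frac32\beta(\alpha)|m-k|}$, its geometric sums stay bounded and the second term is $O(e^{-cC_1\mathcal{L}_{\overline\lambda}|m|})$ uniformly in $s$. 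Therefore $\|\Phi^{-1}(\cdot+s\alpha)\overline{A}_s\Phi\|_0\ge c\,s\,|\hat{b}_m|-C_\star e^{-cC_1\mathcal{L}_{\overline\lambda}|m|}$. Comparing with the sub--exponential transfer--matrix estimate of Lemma~\ref{te}, $\|\overline{A}_s\|_0\le C_\star e^{C\beta(\alpha)|m|}$ for $s\le e^{h|m|/20}$, and taking $s\sim e^{h|m|/20}$, one obtains $|\hat{b}_m|\le C_\star e^{C'C_1\beta(\alpha)|m|-h|m|/20}$. The delicate bookkeeping is exactly (i) controlling the strip on which $\Phi$ is analytic so Lemma~\ref{te} applies at the scale $s\sim e^{h|m|/20}$, and (ii) ensuring all the $\beta(\alpha)$--rate contributions --- the factors $C_\star e^{C'C_1\beta(\alpha)|m|}$ and the term $b^h$ --- are dominated by $e^{-h|m|/20}=e^{-\mathcal{L}_{\overline\lambda}|m|/(4000\pi)}$; this is precisely what the hypothesis $\mathcal{L}_{\overline\lambda}>C_2\beta(\alpha)$, with $C_2$ a large absolute constant, buys. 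It yields $|\hat{b}_m|\le C_\star e^{-\mathcal{L}_{\overline\lambda}|m|/(8000\pi)}$.

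Finally, I would invoke the standard correspondence between the length of the gap labelled $m$ and the resonant Fourier coefficient of the off--diagonal term of the reduced cocycle at the two gap edges --- both $E_m^\pm$ carry the same $\theta$ and reduce to upper--triangular cocycles as above, and $\mathcal{L}_\lambda\equiv0$ on $\Sigma_{\lambda,\alpha}$ while $\mathcal{L}_\lambda>0$ on $G$ --- to conclude $E_m^+-E_m^-\le C_\star|\hat{b}_m|$. Combining the two bounds gives $E_m^+-E_m^-\le C_\star e^{-\mathcal{L}_{\overline\lambda}|m|/(8000\pi)}$; choosing $m_\star=m_\star(\lambda,\alpha,\mathcal{L}_{\overline\lambda})$ large enough that the prefactor $C_\star$ is absorbed into a fixed fraction of the exponent, this is $\le e^{-C^{-1}\mathcal{L}_{\overline\lambda}|m|}$ for $|m|\ge m_\star$ and a suitable absolute constant $C>1$, as claimed.
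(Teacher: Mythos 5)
This lemma is not proved in the paper at all: it is imported verbatim as Theorem~1.1 of \cite{SY}, so there is no internal proof to measure your attempt against. That said, your sketch does reconstruct the actual strategy of \cite{SY} (which follows Leguil's scheme): gap labelling plus the identification of the Bloch phase with the fibered rotation number forces $2\theta\equiv m\alpha\pmod{\mathbb{Z}}$ at a gap edge, so $m$ is the final resonance and $N=\infty$; the truncation errors $G_\star,\beta_1,\beta_2,\beta_3$ vanish and one lands on an exact upper--triangular normal form; and the resonant coefficient $\hat b_m$ is estimated by playing the linear growth of $\|Z_s\|$ (now valid for \emph{all} $s$, since $\|2\theta-m\alpha\|_{\mathbb{R}/\mathbb{Z}}=0$) against the subexponential bound of Lemma~\ref{te} at the scale $s\sim e^{h|m|/20}$. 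This is the right architecture.

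Two points in your write-up are genuine gaps rather than routine bookkeeping. First, the concluding step $E_m^+-E_m^-\le C_\star|\hat b_m|$ is the heart of the theorem and you only assert it as a ``standard correspondence.'' It requires a second, Moser--P\"oschel-type conjugation: one perturbs to $E=E_m^-+\delta$, reduces $\overline{A}_{\lambda,E}$ to a constant matrix plus $O(\delta)+O(|\hat b_m|)$, and shows that if $\delta$ dominates $|\hat b_m|$ and the residual errors then the cocycle is conjugate to a rotation, contradicting uniform hyperbolicity inside the gap; without this the two bounds you prove do not yet say anything about $E_m^+-E_m^-$. Second, your estimate $\|b^h\|_0\le C_\star e^{-cC_1\mathcal{L}_{\overline\lambda}|m|}$ needs the conjugation $B$ to be analytic on a strip of width $\asymp h$, but on that strip the upper bound for $U_\star$ is $e^{Ch|m|}$ (the modes $|k|\le 3|m|$ only satisfy $|u_k|\le 1$), so Lemma~\ref{arll} then gives $\|\Phi\|$ of size $e^{Ch|m|}$ rather than $e^{CC_1\beta(\alpha)|m|}$; this conflicts with the factor $\|\Phi\|_0^{-2}$ in your final comparison unless the absolute constants are arranged carefully (e.g.\ by constructing $B$ on the narrow strip for the norm bound and handling the high modes separately). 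You flag this as ``delicate bookkeeping,'' but it is precisely where the rate $C^{-1}\mathcal{L}_{\overline\lambda}$ in the statement is won or lost, so it cannot be left implicit.
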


\begin{lemma}\label{hl2}
Let     $G_m=(E_m^-,E_m^+)$ for  $m\in\mathbb{Z}\setminus\{0\}$ and $G_0=(-\infty,E_{\min})$. Then for $m'\neq m\in\mathbb{Z}\setminus\{0\}$ with $|m'|\geq|m|$, we have
\begin{equation}\label{hle2}
\mathrm{dist}(G_m,G_{m'})=\inf\limits_{x\in G_m,x'\in G_{m'}}{|x-x'|}\geq c_{\star} e^{-6\beta(\alpha)|m'|},
\end{equation}
and for $m\in\mathbb{Z}\setminus\{0\}$
\begin{equation}\label{hle3}
\mathrm{dist}(G_m,G_0)\geq c_{\star} e^{-6\beta(\alpha)|m|}.
\end{equation}
\end{lemma}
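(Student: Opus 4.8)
The statement asserts a lower bound on the distance between distinct spectral gaps, and between any gap and the unbounded gap $G_0=(-\infty,E_{\min})$. The natural strategy is to use the gap labelling: on $G_m$ one has $\mathcal{N}_{\lambda,\alpha}\equiv m\alpha \bmod \mathbb{Z}$, while on $G_{m'}$ one has $\mathcal{N}_{\lambda,\alpha}\equiv m'\alpha \bmod \mathbb{Z}$. Since $m\neq m'$, these two values of the IDS differ, and in fact
\[
|\mathcal{N}_{\lambda,\alpha}|_{G_m}-\mathcal{N}_{\lambda,\alpha}|_{G_{m'}}|\geq \|(m-m')\alpha\|_{\mathbb{R}/\mathbb{Z}}.
\]
The plan is then to feed this separation in $\mathcal{N}$-values into the $\frac12$-H\"older continuity of $\mathcal{N}_{\lambda,\alpha}$ just proved in Theorem \ref{holder}: if $x\in G_m$ and $x'\in G_{m'}$, then
\[
\|(m-m')\alpha\|_{\mathbb{R}/\mathbb{Z}}\leq |\mathcal{N}_{\lambda,\alpha}(x)-\mathcal{N}_{\lambda,\alpha}(x')|\leq C_\star|x-x'|^{1/2},
\]
so that $|x-x'|\geq C_\star^{-2}\|(m-m')\alpha\|_{\mathbb{R}/\mathbb{Z}}^2$. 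Taking the infimum over $x\in G_m$, $x'\in G_{m'}$ gives $\mathrm{dist}(G_m,G_{m'})\geq C_\star^{-2}\|(m-m')\alpha\|_{\mathbb{R}/\mathbb{Z}}^2$.

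It remains to lower-bound $\|(m-m')\alpha\|_{\mathbb{R}/\mathbb{Z}}$. Since $m\neq m'$ and $|m'|\geq|m|$ we have $1\leq|m-m'|\leq 2|m'|$, so the small divisor estimate (\ref{k alpha}) (a direct consequence of the definition of $\beta(\alpha)$ in (\ref{beta})) yields
\[
\|(m-m')\alpha\|_{\mathbb{R}/\mathbb{Z}}\geq C(\alpha)e^{-\tfrac32\beta(\alpha)|m-m'|}\geq C(\alpha)e^{-3\beta(\alpha)|m'|}.
\]
Squaring and absorbing constants into $c_\star$ gives (\ref{hle2}) with exponent $6\beta(\alpha)|m'|$, exactly as claimed. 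For (\ref{hle3}) one argues the same way: on $G_0$ one has $\mathcal{N}_{\lambda,\alpha}\equiv 0\bmod\mathbb{Z}$ (indeed $\mathcal{N}_{\lambda,\alpha}$ vanishes identically on $(-\infty,E_{\min})$), so for $x\in G_m$, $x'\in G_0$ we get $\|m\alpha\|_{\mathbb{R}/\mathbb{Z}}\leq C_\star|x-x'|^{1/2}$, and (\ref{k alpha}) with $k=m\neq0$ finishes it.

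The only mild subtlety — and the step I would be most careful about — is the passage from ``$\mathcal{N}$ takes two values differing mod $\mathbb{Z}$'' to ``$\mathcal{N}$ takes two values differing by at least $\|(m-m')\alpha\|_{\mathbb{R}/\mathbb{Z}}$ as real numbers''. Here one uses that $\mathcal{N}_{\lambda,\alpha}$ is monotone non-decreasing with range in $[0,1]$, so its actual values on the two gaps are the canonical representatives in $[0,1)$ (or with at most one endpoint value $1$), and the distance between two such representatives of $m\alpha$ and $m'\alpha$ is at least $\|(m-m')\alpha\|_{\mathbb{R}/\mathbb{Z}}$; a degenerate case where one value is $0$ and the other $1$ cannot decrease this below $\|(m-m')\alpha\|_{\mathbb{R}/\mathbb{Z}}$ either. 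Everything else is bookkeeping with the constants $C_\star,c_\star,C(\alpha)$, which are allowed to depend on $\lambda,\alpha$.
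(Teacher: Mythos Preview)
Your proposal is correct and follows essentially the same route as the paper: combine the gap labelling $\mathcal{N}_{\lambda,\alpha}|_{G_m}=m\alpha\bmod\mathbb{Z}$ with the $\tfrac12$-H\"older continuity of Theorem~\ref{holder} to get $\mathrm{dist}(G_m,G_{m'})\geq c_\star\|(m-m')\alpha\|_{\mathbb{R}/\mathbb{Z}}^2$, and then invoke the small divisor estimate~(\ref{k alpha}) with $|m-m'|\leq 2|m'|$ to obtain the exponent $6\beta(\alpha)|m'|$. Your added remark on why the mod-$\mathbb{Z}$ separation translates into an actual separation of $\mathcal{N}$-values is a welcome clarification that the paper leaves implicit.
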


\begin{proof}
From the small divisor condition (\ref{k alpha}), one has
\begin{equation}\label{kalpha}
\begin{aligned}
\|(m-m')\alpha\|_{\mathbb{R}/\mathbb{Z}}\geq& C(\alpha)e^{-3\beta(\alpha)|m'|}
\end{aligned}
\end{equation}
for $|m'|\geq |m|$.

Without loss of generality, we assume $E_m^+\leq E_{m'}^-$. By Theorem \ref{holder}, (\ref{sgap}) and (\ref{kalpha}), we have
\begin{equation*}
\begin{aligned}
\mathrm{dist}(G_m,G_{m'})= &\  |E_{m'}^--E_m^+|\\
\geq& \left(\frac{1}{C_{\star}}\left|\mathcal{N}_{\lambda ,\alpha}(E_{m'}^-)-\mathcal{N}_{\lambda ,\alpha}(E_m^+)\right|\right)^2\\
\geq& \ c_{\star} \|(m-m')\alpha\|_{\mathbb{R}/\mathbb{Z}}^2,\\
\geq&\  c_{\star}e^{-6\beta(\alpha)|m'|}
\end{aligned}
\end{equation*}
which completes the proof of  (\ref{hle2}).  The proof of (\ref{hle3}) is similar.
\end{proof}

Now we can give the proof of Theorem \ref{main2}.

\begin{proof}[{\bf Proof of Theorem \ref{main2}}]
Assume $0<\sigma\leq \sigma_{\star}(\lambda,\alpha,\epsilon)$.
For $E\in\Sigma_{\lambda,\alpha}$ and $ \sigma $, let
$$\mathcal{R}(E,\sigma)=\{m\in\mathbb{Z}\setminus\{0\}: (E-\sigma,E+\sigma)\cap G_m\neq \emptyset\}.$$
Define $m_0\in\mathbb{Z}\setminus\{0\}$ with $|m_0|=\min\limits_{m\in\mathcal{R}(E,\sigma)}|m|$. For any $m\in\mathcal{R}(E,\sigma)$, one has
\begin{equation*}
\mathrm{dist}(G_m,G_{m_0})\leq 2\sigma.
\end{equation*}

We first assume $(E-\sigma,E+\sigma)\cap G_0=\emptyset$. Recalling (\ref{hle2}), we have for any $m\in\mathcal{R}(E,\sigma)$ with $m\neq m_0$,
\begin{equation*}
2\sigma\geq c_{\star}e^{-6\beta(\alpha)|m|},
\end{equation*}
that is 
\begin{equation}\label{hle7}
|m|\geq \frac{-\ln{(C_{\star}\sigma)}}{6\beta(\alpha)}.
\end{equation}
Then by Lemma \ref{gap}, we obtain
\begin{equation}\label{leb}
\begin{aligned}
&\sum\limits_{m\in\mathcal{R}(E,\sigma),m\neq m_0}\mathrm{Leb}((E-\sigma,E+\sigma)\cap G_m)\\
\leq\  &\sum\limits_{m\in\mathcal{R}(E,\sigma),m\neq m_0}(E_m^+-E_m^-)\\
\leq \ & \sum\limits_{|m|\geq\frac{-\ln{(C_{\star}\sigma)}}{6\beta(\alpha)}}C_{\star}e^{-C^{-1}\mathcal{L}_{\overline{\lambda}}|m|}\\
\leq \ &\epsilon\sigma.
\end{aligned}
\end{equation}
On the other hand, $E\in\Sigma_{\lambda ,\alpha}$ implies $E\notin G_{m_0}$. Thus  we have
\begin{equation}\label{hle9}
\mathrm{Leb}((E-\sigma,E+\sigma)\cap G_{m_0})\leq\sigma.
\end{equation}
In this case,   (\ref{leb}) and (\ref{hle9}) implies
\begin{equation*}\label{hle10}
\begin{aligned}
&\ \mathrm{Leb}((E-\sigma,E+\sigma)\cap \Sigma_{\lambda,\alpha})\\
\geq&\ 2\sigma-\mathrm{Leb}((E-\sigma,E+\sigma)\cap G_{m_0})\\
&\ \ -\sum\limits_{m\in\mathcal{R}(E,\sigma),m\neq m_0}\mathrm{Leb}((E-\sigma,E+\sigma)\cap G_m)\\
\geq&\ 2\sigma-\sigma-\epsilon\sigma\geq(1-\epsilon)\sigma.
\end{aligned}
\end{equation*}

In the case $(E-\sigma,E+\sigma)\cap G_0\neq\emptyset$, we have  $$0< E_m^--E_{\min}\leq2\sigma$$
for any $m\in\mathcal{R}(E,\sigma)$. Thus, (\ref{hle7}) also holds for any $  m\in \mathcal{R}(E,\sigma)$ by (\ref{hle3}).
From the proof of (\ref{leb}), we have
\begin{equation}\label{hle11}
\sum\limits_{m\in\mathcal{R}(E,\sigma)}\mathrm{Leb}((E-\sigma,E+\sigma)\cap G_m)\leq \epsilon\sigma.
\end{equation}
Noticing that  $E\in\Sigma_{\lambda,\alpha}$ and  $E\notin G_0$, one has
\begin{equation}\label{liu7}
   \mathrm{Leb}((E-\sigma,E+\sigma)\cap G_0)\leq\sigma.
\end{equation}
By
  (\ref{hle11})  and \eqref{liu7}, we obtained
\begin{equation*}
\begin{aligned}
&\ \mathrm{Leb}((E-\sigma,E+\sigma)\cap \Sigma_{\lambda,\alpha})\\
\geq&\ 2\sigma-\mathrm{Leb}((E-\sigma,E+\sigma)\cap G_0)\\
&\ \  \ -\sum\limits_{m\in\mathcal{R}(E,\sigma)}\mathrm{Leb}((E-\sigma,E+\sigma)\cap G_m)\\
\geq & \ 2\sigma-\sigma-\epsilon\sigma\geq(1-\epsilon)\sigma.
\end{aligned}
\end{equation*}
Putting all the cases together, we complete the proof of    Theorem \ref{main2}.
\end{proof}

\footnotesize
\bibliographystyle{abbrv} 

\end{document}